\newcommand{\dom}{\text{dom}}
\newcommand{\epi}{\text{epi}}
\newcommand{\R}{\mathbb{R}}
\newcommand{\vx}{\textbf{x}}
\newcommand{\vy}{\textbf{y}}
\newcommand{\vz}{\textbf{z}}
\newcommand{\vh}{\textbf{h}}
\newcommand{\bD}{\textbf{D}}
\newcommand{\orcid}[1]{\href{https://orcid.org/#1}{\includegraphics[scale=1]{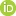}}}
\DeclareMathOperator*{\argmin}{argmin}
\newtheorem{assumption}{Assumption}
\begin{document}
\title{A Variable Metric and Nesterov Extrapolated Proximal DCA with Backtracking for A Composite DC Program}
%\title{A variable metric and extrapolated proximal difference-of-convex algorithm with backtracking}
\titlerunning{A variable metric and extrapolated proximal DCA with backtracking}

\author{Yu You \and Yi-Shuai Niu\orcid{0000-0002-9993-3681}}
\authorrunning{Yu You \and Yi-Shuai Niu } 

\institute{
           Yu You \at
           School of Mathematical Sciences, Shanghai Jiao Tong University, Shanghai, China \\
           \email{youyu0828@sjtu.edu.cn}
			 \and
			Yi-Shuai Niu \at
			Department of Applied Mathematics, The Hong Kong Polytechnic University, Hong Kong \\
			\email{yi-shuai.niu@polyu.edu.hk}
}

\date{}

\maketitle

\begin{abstract}
In this paper, we consider a composite difference-of-convex (DC) program, whose objective function is the sum of a smooth convex function with Lipschitz continuous gradient, a proper closed and convex function, and a continuous concave function. This problem has many applications in machine learning and data science. The proximal DCA (pDCA), a special case of the classical DCA, as well as two Nesterov-type extrapolated DCA -- ADCA (Phan et al. IJCAI:1369--1375, 2018) and pDCAe (Wen et al. Comput Optim Appl 69:297--324, 2018) -- can solve this problem. The algorithmic step-sizes of pDCA, pDCAe, and ADCA are fixed and determined by estimating a prior the smoothness parameter of the loss function. However, such an estimate may be hard to obtain or poor in some real-world applications. Motivated by this difficulty, we propose a variable metric and Nesterov extrapolated proximal DCA with backtracking (SPDCAe), which combines the backtracking line search procedure (not necessarily monotone) and the Nesterov's extrapolation for potential acceleration; moreover, the variable metric method is incorporated for better local approximation. Numerical simulations on sparse binary logistic regression and compressed sensing with Poisson noise demonstrate the effectiveness of our proposed method.
\keywords{Difference-of-convex programming \and Nesterov's extrapolation \and backtracking line search \and variable metric \and sparse binary logistic regression \and compressed sensing with Poisson noise}
\subclass{65K05 \and 90C26 \and 90C30}
\end{abstract}

\section{Introduction}
\label{intro} 
Difference-of-convex (DC) programming has been an active area of research in nonconvex and nonsmooth optimization \cite{hiriart1985generalized,Horst1999,TAO1986249,Pham_dca_theory,DCA30,surfaceDC,de2020abc}. They arise in extensive applications such as sparsity learning \cite{Jackl12,le2015dc}, clustering \cite{an2007new}, molecular conformation \cite{an2003large}, trust region subproblem \cite{Pham_trs,beck2018globally}, portfolio optimization \cite{pham2011efficient,pham2016dc,niu2019higher}, control theory \cite{niu2014dc}, natural language processing \cite{niu2021difference}, image denoising \cite{RInDCA}, mixed-integer optimization \cite{le2001continuous,niu2008dc,niu2010programmation,niu2019parallel} and eigenvalue complementarity problem \cite{le2012dc,niu2013efficient,niu2015solving,niu2019improved}, to name a few; see \cite{DCA30} and the references therein for a comprehensive introduction about DC programming and its applications.

In this paper, we focus on a composite DC program in form of
\begin{equation*}
\begin{split}
\text{min} &\{F(\vx) := f(\vx) + g(\vx)-h(\vx):\vx\in\R^n\}, \\
\end{split}
\label{P}
\tag{$P$}
\end{equation*}
where the following are assumed throughout the manuscript:
\begin{assumption}
	\label{intro:assump1}
	\begin{itemize}
		\item[(a)] $f$ and $g$ are proper convex functions from $\mathbb{R}^n$ to $(-\infty,\infty]$, $h:\mathbb{R}^n\rightarrow \mathbb{R}$ is convex; moreover, $g$ is closed.
		\item[(b)] There exists a nonempty closed and convex set $Y\subseteq \R^n$ such that $\text{dom}(g) \subseteq Y \subseteq \text{int}( \text{dom}(f))$.
		\item[(c)] $f$ is $L$-smooth over $Y$, i.e., the gradient of $f$ is $L$-Lipschitz over $Y$.
		\item[(d)]  $F$ is bounded from below, i.e., there exists scalar $F^*$ such that {\normalfont $F(\vx) \geq F^*$ for all $\vx \in \R^n$}.
	\end{itemize}
\end{assumption}
 Note that we do not require that the loss function $f$ is real-valued over $\R^n$ since the scope of the model can be limited in this way. Next, by setting $h\equiv0$ in \eqref{P} we derive the  convex optimization problem
 \begin{equation*}
 	\begin{split}
 		\text{min} &\{\Phi(\vx) := f(\vx) + g(\vx):\vx\in\R^n\}, \\
 	\end{split}
 	\label{CP}
 	\tag{$P_c$}
 \end{equation*}
for which we additionally assume the following:
\begin{assumption}
	\label{intro:assump2}
	The optimal solution set of \eqref{CP} is nonempty, and its optimal value is denoted by $\Phi^*$.	
\end{assumption} 
  
The classical DCA \cite{Pham_dca_theory,Pham_trs,Lethi_2005,phamdinh2014} is renowned for solving \eqref{P}, which has been introduced by Pham Dinh Tao in 1985 and extensively developed by Le Thi Hoai An and Pham Dinh Tao since 1994. One possible (commonly used) DC decomposition for \eqref{P} takes the following form
\begin{equation}
	\min\{F(\textbf{x}):=G(\textbf{x})-H(\textbf{x})\footnote{The convention $\infty -\infty$ is adopted.}: \textbf{x}\in \R^n\},
	\label{ep}
\end{equation}
where $G(\vx)=L\Vert \vx \Vert^2/2 + g(\vx)$ and $H(\vx) = L\Vert \vx \Vert^2/2 - f(\vx) + h(\vx)$ when $\vx \in Y$ and $\infty$ otherwise. Then DCA applied for this DC decomposition yields the proximal DC algorithm (pDCA) proposed in \cite{DC_Takeda}. For the special case where $Y=\R^n$, the accelerated variant of DCA (ADCA) \cite{nhat2018accelerated}, which incorporated the Nesterov's extrapolation \cite{nesterov1983method,nesterov2013gradient} for potential acceleration, can be applied for solving \eqref{ep}; meanwhile, Wen et al. \cite{wen2018proximal} used the same extrapolation technique for \eqref{P} and proposed their accelerated algorithm pDCAe, which is a variant of pDCA with additional extrapolation term. The major difference between ADCA and pDCAe may be that ADCA needs to determine whether or not to conduct the extrapolation $\vz^k = \vx^k + \beta_k (\vx^k-\vx^{k-1})$  by comparing the functions values of the current extrapolation point $\vz^k$ and the previous $q+1$ iterates $\vx^k,\cdots,\vx^{k-q}$, while there is not such a necessity for pDCAe. Thus, ADCA costs more per iteration than pDCAe. However, ADCA has a wider range of applications. For example, pDCAe will reduce to DCA in the image denoising with nonconvex total variation regularizer~\cite{de2019inertial}, thus lossing the effect of extrapolation; while ADCA still occupies the extrapolation term and performs better than DCA \cite{RInDCA}. Note that the model for both ADCA and pDCAe assumes that $f$ is real-valued over the whole space, which limits the scope of the algorithms for some loss functions with domain not being $\R^n$, such as the generalized Kullback-Leibler divergence in the Poisson noise model \cite{SPIRAL}. Indeed, ADCA and pDCAe can be generalized for addressing this situation by additionally projecting the extrapolation point onto the set $Y$.

Now, let us consider the convex model \eqref{CP}. In some practical applications, the Lipschitz constant of the gradient of $f$ might be hard to estimate. The (monotone) backtracking line search procedure \cite{FISTA} is a common strategy for addressing this issue. However, in this way the step-size is nonincreasing, which may seriously affect the convergence speed if the local curvature of the loss function is relatively large near the initial iterates but small near the tail; see \cite{AdaptiveFISTA} for an illustration of this by the compressed sensing and the binary logistic regression problems. In addition, for those problems with the above property, the (global) Lipschitz constant is generally lager than the local one, thus the constant step-size strategy is not recommended. Taking this into account, \cite{AdaptiveFISTA} proposed a non-monotone backtracking strategy for addressing this kind of problems. Note that model \eqref{P} inherits the above issues in \eqref{CP}.

 In this paper, in term of the success of Nesterov's extrapolation techniques for acceleration, the variable metric method for better local approximation \cite{SFBEM,BONETTINI2021113192}, and the non-monotone backtracking for adaptive step-sizes selection, we incorporate all of these techniques to propose a scaled proximal DC algorithm with extrapolation and backtracking (SPDCAe) for solving problem \eqref{P}. We prove that for suitable selections of the extrapolation parameters and the scaling matrices, if the generated sequence of SPDCAe is bounded, then any limit point of this sequence is a critical point of \eqref{P}. Moreover, we also demonstrate that SPDCAe for the convex case \eqref{CP}, denote as SFISTA, enjoys the optimal $\mathcal{O}(1/k^2)$ convergence rate in function values. This rate of convergence coincides with that of the well-known FISTA algorithm \cite{FISTA} and SFBEM \cite{SFBEM} (a scaling version of FISTA with monotone backtracking). Finally, we point out that our SPDCAe (resp. SFISTA) actually covers pDCAe (resp. SFBEM).

The rest of the paper is organized as follows. Sect. \ref{sec:2} reviews some notations and preliminaries in convex analysis. In Sect. \ref{sec:3}, we describe the DC optimization problem as well as its convex case we are concerned in this paper and present our algorithm SPDCAe and its convex version SFISTA. Sect. \ref{sec:4} focuses on establishing the subsequential convergence of SPDCAe and the optimal convergence rate of SFISTA.  Numerical results on problems of sparse binary logistic regression and compressed sensing with Poisson noise are summarized in Sect. \ref{sec:5}.

\section{Notations and preliminaries}
\label{sec:2}
Let $\R^n$ denote the $n$-dimensional column vector space endowed with  canonical inner product $\langle \cdot, \cdot \rangle$ and its induced norm $\Vert \cdot \Vert$. For an extended real-valued function $p:\R^n\rightarrow (-\infty,\infty]$, the set  $$\dom(p) := \{\textbf{x}\in \R^n:p(\textbf{x}) < \infty \}$$ denotes its effective domain. If $\dom (p) \neq \emptyset$ and $p$ does not attain the value $-\infty$, then $p$ is called a proper function. The notation $\text{int}(\dom(p))$ denotes the interior set of $\dom(f)$, and 
$$\epi (p):=\{(\textbf{x},t)\in \R^n \times \R:p(\textbf{x})\leq t\}$$ denotes the epigraph of $p$. We have the definition that $p$ is closed (or convex) if $\epi (p)$ is closed (or convex).

%Note that the form of gradient is related to the inner product. Let $\nabla p(\vx)$ be the gradient of $p$ at $\vx$ with respect to the canonical inner product, i.e., $\langle \vx,\vy\rangle = \vx^T\vy$. If we consider another inner product $\langle \vx,\vy\rangle_{\textbf{Q}}=\vx^T\textbf{Q}\vy$, where $\textbf{Q}$ is a positive definite matrix. Then, the gradient of $p$ at $\vx$ with respect to this new inner product is $\textbf{Q}^{-1}\nabla p(\vx)$.

Let $p:\R^n\rightarrow (-\infty,\infty]$ be a proper function and $\vx \in \text{int}(\dom(p))$. Then $p$ is said to be differentiable at $\vx$ if there exists $\textbf{g}\in\R^n$ such that 
$$\lim\limits_{\textbf{d}\rightarrow 0} \frac{p(\vx+\textbf{d})-p(\vx)-\langle \textbf{g},\textbf{d}\rangle}{\Vert \textbf{d}\Vert} = 0.$$ The unique vector $\textbf{g}$ as above is called the gradient of $p$ at $\vx$, and is denoted as $\nabla p(\vx)$. Finally, we say that $p$ is $L$-smooth over $Y$ if $p$ is differentiable over $Y$ and $\Vert \nabla p(\vx)-\nabla p(\vy)\Vert \leq L \Vert \vx-\vy\Vert$ for any $\vx$ and $\vy$. In addition, if $Y$ is convex, then we have the next descent lemma.
\begin{lemma}[see e.g., \cite{Beck_1order}]
Let $p:\R^n\rightarrow (-\infty,\infty]$ be an $L$-smooth function ($L\geq 0$) over a given convex set $Y$. Then for any {\normalfont $\vx, \vy \in Y$}, 
\begin{equation}
	{\normalfont
	p(\vy) \leq p(\vx) + \langle \nabla p(\vx),\vy-\vx\rangle + \frac{L}{2}\Vert \vx-\vy\Vert^2.
	}
\end{equation} 
\end{lemma}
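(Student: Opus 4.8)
The plan is to reduce the multivariate inequality to a one-dimensional statement along the segment joining $\vx$ and $\vy$, and then invoke the fundamental theorem of calculus. Since $Y$ is convex and $\vx,\vy\in Y$, the whole segment $\{\vx+t(\vy-\vx):t\in[0,1]\}$ lies in $Y$, so $p$ is differentiable (hence finite) at every point of this segment and $\nabla p$ is well defined there. First I would introduce the scalar function $\varphi(t):=p(\vx+t(\vy-\vx))$ on $[0,1]$; by the chain rule it is differentiable with $\varphi'(t)=\langle \nabla p(\vx+t(\vy-\vx)),\vy-\vx\rangle$, and this derivative is continuous because the $L$-Lipschitz gradient is in particular continuous along the segment.

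Next I would apply the fundamental theorem of calculus to write $p(\vy)-p(\vx)=\varphi(1)-\varphi(0)=\int_0^1\varphi'(t)\,dt$. Subtracting the linear term $\langle\nabla p(\vx),\vy-\vx\rangle$ from both sides gives $p(\vy)-p(\vx)-\langle\nabla p(\vx),\vy-\vx\rangle=\int_0^1\langle\nabla p(\vx+t(\vy-\vx))-\nabla p(\vx),\vy-\vx\rangle\,dt$. The core estimate then combines the Cauchy--Schwarz inequality with the $L$-Lipschitz property: the integrand is bounded above by $\Vert\nabla p(\vx+t(\vy-\vx))-\nabla p(\vx)\Vert\,\Vert\vy-\vx\Vert\le L\Vert t(\vy-\vx)\Vert\,\Vert\vy-\vx\Vert=Lt\Vert\vy-\vx\Vert^2$. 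Integrating $Lt$ over $[0,1]$ contributes the factor $1/2$, yielding exactly the claimed bound $\tfrac{L}{2}\Vert\vx-\vy\Vert^2$.

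The only technical point that needs care is justifying the integral representation, namely that $\varphi$ is absolutely continuous on the closed interval $[0,1]$ so that the fundamental theorem of calculus applies up to the endpoints. This follows once we observe that differentiability of $p$ on $Y$ together with convexity of $Y$ makes $\varphi$ continuously differentiable on $[0,1]$ (interpreting the endpoint derivatives one-sidedly), and a continuous derivative is Riemann integrable. I do not expect any genuine obstacle here; the substance of the argument is simply the Cauchy--Schwarz-plus-Lipschitz bound on the integrand, and the remainder is bookkeeping.
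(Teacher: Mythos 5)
Your proof is correct: it is the standard descent-lemma argument (restrict to the segment, apply the fundamental theorem of calculus, then bound the integrand via Cauchy--Schwarz and the $L$-Lipschitz gradient), and this is precisely the proof given in the reference \cite{Beck_1order} that the paper cites in lieu of supplying its own. The technical points you flag (the segment staying in $Y$ by convexity, and continuity of $\varphi'$ justifying the integral representation) are handled correctly, so there is nothing to add.
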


Now, let $p:\R^n\rightarrow (-\infty,\infty]$ be a proper closed and convex function. A vector $\textbf{g}\in \R^n$ is called a subgradient of $p$ at $\vx$ if $$p(\vy) \geq p(\vx) +\langle \textbf{g},\vy-\vx\rangle ,\; \forall  \vy\in \R^n,$$ and the subdifferential of $g$ at $\vx$ is defined by
$$
\partial p(\vx)=\{\textbf{g}\in \R^n:p(\vy) \geq p(\vx) +\langle \textbf{g},\vy-\vx\rangle ,\; \forall \vy\in \R^n\}.
$$
The proximal mapping of $p$ is the operator defined by 
\begin{equation*}
	\text{Prox}_{p}(\vx) = \argmin_{\textbf{u}\in\R^n} \left\{p(\textbf{u})+\frac{1}{2}\Vert \textbf{u}-\vx\Vert^2 \right\} ,\; \forall  \vx\in \R^n.
\end{equation*}
Next, we summarize some properties about the proximal mapping. 
\begin{lemma}[see e.g., \cite{Beck_1order}]
	\label{lem:2.2beck}
	Let $p:\R^n\rightarrow (-\infty,\infty]$ be a proper closed and convex function. Then
	\begin{itemize}
		\item ${\normalfont \text{Prox}_{p}(\vx)}$ is a singleton for any ${\normalfont \vx} \in \R^n$.
		\item ${\normalfont \text{Prox}_{p}}$ is nonexpansive, i.e., for any ${\normalfont \vx}$ and ${\normalfont \vy}$ it holds that
		$${\normalfont \Vert \text{Prox}_{p}(\vx)-\text{Prox}_{p}(\vy)\Vert  \leq \Vert \vx -\vy\Vert}.$$ 
		\item If ${\normalfont \vz = \text{Prox}_{p}(\vx)}$, then ${\normalfont \vx-\vz \in \partial p(\vz)}$.
	\end{itemize}
\end{lemma}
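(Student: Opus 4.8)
The plan is to prove the three assertions in the order well-definedness (singleton), subgradient characterization (third bullet), and then nonexpansiveness (second bullet), since the last follows cleanly from the characterization. Throughout, fix $\vx \in \R^n$ and write $\phi(\textbf{u}) := p(\textbf{u}) + \frac{1}{2}\Vert \textbf{u} - \vx\Vert^2$ for the objective defining $\text{Prox}_p(\vx)$.

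For the singleton claim, I would first note that $\phi$ is proper (its effective domain equals $\dom(p) \neq \emptyset$) and closed, being the sum of the proper closed convex function $p$ and the continuous convex quadratic $\frac{1}{2}\Vert \cdot - \vx\Vert^2$. The one genuinely analytic input is that a proper closed convex function is minorized by an affine function; this makes $\phi$ coercive, so that together with lower semicontinuity a minimizer exists. Uniqueness is then immediate from strict convexity: the quadratic term renders $\phi$ $1$-strongly, hence strictly, convex, so it cannot admit two distinct minimizers. Therefore $\text{Prox}_p(\vx)$ is a singleton.

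For the third bullet, let $\vz = \text{Prox}_p(\vx)$. Rather than invoking a subdifferential sum rule, I would extract the subgradient inequality directly from optimality. For arbitrary $\vy \in \R^n$ and $t \in (0,1]$, convexity of $p$ gives $p(\vz + t(\vy - \vz)) \leq (1-t)p(\vz) + t\,p(\vy)$; substituting the point $\vz + t(\vy - \vz)$ into the optimality inequality $\phi(\vz + t(\vy - \vz)) \geq \phi(\vz)$, expanding the squared norm, and dividing by $t > 0$ yields
\begin{equation*}
	p(\vy) - p(\vz) + \langle \vz - \vx, \vy - \vz\rangle + \frac{t}{2}\Vert \vy - \vz\Vert^2 \geq 0 .
\end{equation*}
Letting $t \to 0^+$ gives $p(\vy) \geq p(\vz) + \langle \vx - \vz, \vy - \vz\rangle$ for all $\vy \in \R^n$, which is exactly $\vx - \vz \in \partial p(\vz)$.

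For nonexpansiveness, set $\vz_1 = \text{Prox}_p(\vx_1)$ and $\vz_2 = \text{Prox}_p(\vx_2)$, so that $\vx_1 - \vz_1 \in \partial p(\vz_1)$ and $\vx_2 - \vz_2 \in \partial p(\vz_2)$ by the previous step. Writing the subgradient inequality for $\vz_1$ evaluated at $\vz_2$ and for $\vz_2$ evaluated at $\vz_1$, then adding the two, the $p$-values cancel and I obtain
\begin{equation*}
	\langle (\vx_1 - \vx_2) - (\vz_1 - \vz_2),\, \vz_1 - \vz_2\rangle \geq 0 .
\end{equation*}
Rearranging gives $\Vert \vz_1 - \vz_2\Vert^2 \leq \langle \vx_1 - \vx_2,\, \vz_1 - \vz_2\rangle$, and an application of the Cauchy--Schwarz inequality yields $\Vert \vz_1 - \vz_2\Vert \leq \Vert \vx_1 - \vx_2\Vert$, as claimed. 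I expect the main obstacle to be the existence of the minimizer in the first part: the remaining steps are purely algebraic, whereas existence genuinely requires the coercivity argument (via affine minorization of a proper closed convex function), or else a citation to a standard existence theorem.
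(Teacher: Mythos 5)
Your proof is correct. Note, however, that the paper does not prove this lemma at all: it is quoted as a standard result with a pointer to the textbook reference \cite{Beck_1order}, so there is no in-paper argument to compare against. What you have written is essentially the canonical textbook proof that the citation stands for: existence of the minimizer via an affine minorant of the proper closed convex $p$ (making $\textbf{u}\mapsto p(\textbf{u})+\frac{1}{2}\Vert\textbf{u}-\vx\Vert^2$ coercive and lower semicontinuous), uniqueness via $1$-strong convexity, the optimality characterization $\vx-\vz\in\partial p(\vz)$ obtained by a directional perturbation $\vz+t(\vy-\vz)$ and letting $t\to 0^+$, and nonexpansiveness deduced by summing the two subgradient inequalities. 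Your ordering of the bullets (singleton, then characterization, then nonexpansiveness) is the right logical dependency, and it is worth observing that your intermediate inequality $\Vert\vz_1-\vz_2\Vert^2\leq\langle\vx_1-\vx_2,\vz_1-\vz_2\rangle$ is in fact the stronger property of \emph{firm} nonexpansiveness, from which the stated nonexpansiveness follows by Cauchy--Schwarz; this is exactly how the cited reference proceeds. The only steps you lean on without proof, the affine minorization of a proper closed convex function and the Weierstrass-type existence theorem, are standard and appropriately flagged as such.
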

Note that if $p$ is the indicator function $\delta_{Y}$ of a nonempty and closed convex set $Y$, i.e., $\delta_{Y}(\vx)$ is equal to $0$ if $\vx\in Y$ and $\infty$ otherwise, then $\text{Prox}_{\delta_{Y}}$ is the projection operator over Y, denoted as ${\rm \Pi}_Y$. The notations $\text{Prox}_{p}^{\textbf{Q}}$  and ${\rm \Pi}_Y^{\textbf{Q}}$ ($\textbf{Q}$ is a positive definite matrix) are adopted, the former actually means
\begin{equation*}
	\text{Prox}_{p}^{\textbf{Q}}(\vx) = \argmin_{\textbf{u}\in\R^n} \left\{p(\textbf{u})+\frac{1}{2}\Vert \textbf{u}-\vx\Vert_{\textbf{Q}}^2 \right\} ,\; \forall \vx\in \R^n.
\end{equation*}
In this case, $\vz = \text{Prox}_{p}^{\textbf{Q}}(\vx)$ implies that $\vx -\vz \in \textbf{Q}^{-1}\partial p(\vz)$.

For two vectors $\vx, \vy \in \R^n$, their Hardmard product,  denoted as $\vx \odot \vy$, is the vector comprising the component-wise products: $\vx \odot \vy = (x_iy_i)_{i=1}^n$. The nonnegative orthant is denoted by $\R_{+}^n$. Next, $\text{diag}(\vx)$ denotes the $n\times n$ diagonal matrix with its diagonal vector being $\vx$. The set of all $n\times n$ symmetric matrices is denoted as $\mathbb{S}^n$; moreover, $\mathbb{S}_{+}^n$ (or $\mathbb{S}_{++}^n$) means the set of all $n\times n$ positive semidefinite (or positive definite) matrices. For $\bD_1, \bD_2 \in \mathbb{S}^n$, $\bD_1  \preceq \bD_2$ means that $\bD_2-\bD_1\in \mathbb{S}_{+}^n$. For $\mu >0$, $\bD_{\mu}$ denotes the set $\{\bD\in\mathbb{S}^n:\mu \textbf{I} \preceq \bD\}$, where $\textbf{I}$ is the identity matrix.

Finally, we present a technical lemma, adapted from \cite[Proposition A.31]{bertsekas2015convex}, which will be utilized for establishing the convergence property of our proposed method SPDCAe for \eqref{P} and its convex version SFISTA for \eqref{CP}.
\begin{lemma}[\cite{bertsekas2015convex}]
	\label{ine1}
	Let $\{a_k\}, \{\zeta_k\}$, $\{\xi_k\}$, and $\{\gamma_k\}$ be nonnegative sequences of real numbers such that 
	\begin{equation*}
		a_{k+1} \leq (1+\zeta_k)a_k - \xi_k + \gamma_k, \quad  k = 1,2,\cdots,
	\end{equation*}
and 
\[
\sum_{k=1}^{\infty}\zeta_k < \infty, \quad \sum_{k=1}^{\infty}\gamma_k < \infty.
\]
Then $\{a_k\}$ converges to a finite value and $ \{\xi_k\}$ converges to 0.	
\end{lemma}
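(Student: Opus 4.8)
The plan is to reduce the given recurrence to a purely additive one by absorbing the multiplicative factors $(1+\zeta_k)$ into a normalizing product, and then to apply a standard telescoping/tail-sum argument. First I would introduce the partial products $P_1 := 1$ and $P_k := \prod_{j=1}^{k-1}(1+\zeta_j)$ for $k\geq 2$, so that $P_{k+1} = (1+\zeta_k)P_k$. Using $\log(1+\zeta_j)\leq \zeta_j$ together with the hypothesis $\sum_k \zeta_k < \infty$, the series $\sum_j \log(1+\zeta_j)$ converges; hence $P_k$ increases to a finite limit $P_\infty\in[1,\infty)$, and in particular $1\leq P_k\leq P_\infty$ for all $k$.

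Next I would set $b_k := a_k/P_k \geq 0$ and divide the recurrence by $P_{k+1}$, obtaining
\[
b_{k+1} \leq b_k - \tilde\xi_k + \tilde\gamma_k, \qquad \tilde\xi_k := \xi_k/P_{k+1},\quad \tilde\gamma_k := \gamma_k/P_{k+1}.
\]
Since $P_{k+1}\geq 1$, we have $0\leq \tilde\gamma_k\leq \gamma_k$, so $\sum_k \tilde\gamma_k < \infty$; write $T_k := \sum_{j\geq k}\tilde\gamma_j$ for its (finite) tails, so that $\tilde\gamma_k = T_k - T_{k+1}$ and $T_k\to 0$.

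The key step is to form the auxiliary sequence $v_k := b_k + T_k\geq 0$. Substituting the displayed inequality gives $v_{k+1}\leq v_k - \tilde\xi_k \leq v_k$, so $v_k$ is nonincreasing and bounded below by $0$, hence converges to some $v_\infty\geq 0$. Because $T_k\to 0$, I then get $b_k = v_k - T_k \to v_\infty$, and therefore $a_k = P_k b_k \to P_\infty v_\infty$, a finite value, which is the first assertion. For the second assertion, summing $\tilde\xi_k \leq v_k - v_{k+1}$ telescopes to $\sum_{k=1}^{N}\tilde\xi_k \leq v_1 - v_{N+1} \leq v_1 < \infty$, so $\sum_k \tilde\xi_k < \infty$ and in particular $\tilde\xi_k\to 0$; since $\xi_k = P_{k+1}\tilde\xi_k \leq P_\infty\,\tilde\xi_k$, it follows that $\xi_k\to 0$.

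I expect the only real subtlety to be the treatment of the multiplicative factor: one must verify that the partial products $P_k$ stay bounded (equivalently, that $\prod_j(1+\zeta_j)$ converges), which is exactly where the summability $\sum_k \zeta_k < \infty$ is used. Once the recurrence has been renormalized into the additive form $b_{k+1}\leq b_k - \tilde\xi_k + \tilde\gamma_k$, everything else is the routine ``almost-supermartingale'' telescoping argument carried out through the monotone sequence $v_k = b_k + T_k$.
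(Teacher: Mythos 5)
Your proof is correct and complete: the partial products $P_k=\prod_{j=1}^{k-1}(1+\zeta_j)$ converge because $\sum_k\zeta_k<\infty$, the normalized sequence $b_k=a_k/P_k$ satisfies the purely additive recursion, and the monotonicity of $v_k=b_k+T_k$ gives both convergence of $\{a_k\}$ and, via telescoping, even the stronger conclusion $\sum_k\xi_k<\infty$ (hence $\xi_k\to 0$). Note that the paper itself contains no proof of this lemma --- it is quoted from \cite{bertsekas2015convex} --- so there is nothing internal to compare against; your normalization-plus-telescoping argument is the standard route for results of this Robbins--Siegmund type and matches the proof technique of the cited reference.
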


%\begin{proof}
%	We just prove the second term. First, it follows from the first term that $\{a_k\}$ converges, whose limit is denote by $\bar{a}$. Moreover, $\sum_{k=1}^{\infty}\zeta_k < \infty$ implies that $\zeta_k \rightarrow 0$. Then, take the limit inferior at the both side of $a_{k+1} \leq (1+\zeta_k)a_k - \xi_k$, we have $\bar{a} \leq \bar{a} - \lim\sup \xi_k$, i.e., $\lim\sup \xi_k \leq 0$. Consider also the $\{\xi_k\}$ is nonnegative, thus we must have $\lim \xi_k = 0$. \qed
%\end{proof}

\section{Problem formulation and the proposed algorithms} \label{sec:3}
In this section, we first introduce the DC optimization problem we focus on in this paper and present our proposed algorithm SPDCAe for \eqref{P}, which incorporates the Nesterov's extrapolation, the backtracking line search (monotone and non-monotone), and the variable metric method. Next, we describe in detail the algorithmic procedure of SFISTA, which is the convex version of SPDCAe with non-monotone backtracking for \eqref{CP}.    

In this paper, we specifically focus on the  composite DC program
\begin{equation*}
	\begin{split}
		\text{min} &\{F(\vx) := f(\vx) + g(\vx)-h(\vx):\vx\in\R^n\}. \\
	\end{split}
	\label{P}
	\tag{$P$}
\end{equation*}
Some assumptions for the involved functions are summarized in Assumption \ref{intro:assump1}.  Note that we do not follow the assumption of the two  accelerated algorithms ADCA and pDCAe developed respectively in \cite{nhat2018accelerated,wen2018proximal} that $f$ is real-valued over the whole space since the scope of the model can be limited in this way. For example, the loss function in the Poisson noise model \cite{SPIRAL} is the generalized Kullback-Leibler divergence, whose effective domain is larger than the non-negative orthant but not equal to the whole space. Indeed, their algorithms can be generalized for this situation by additionally projecting the extrapolation point onto the set $Y$ at each iteration. 
%, $0 < \underline{L} < \overline{L}$
\begin{algorithm}[ht!]
	\caption{SPDCAe for \eqref{P}}
	\label{algo:ASpDCAe}
	\KwIn{$\eta >1, \underline{L}>0$. Set $\vx^0 = \vx^{-1} \in Y \subseteq \R^n$. }
	
	\For{$k=1, 2, 3, \cdots$}
	{   
		compute any $\vh^{k-1} \in \partial h(\vx^{k-1})$ and pick $L_k \geq \underline{L}$;
		
		set $i=0$;
		
		\Repeat
		{
			{\normalfont$f(\vx^k) \leq f(\vy^k) + \langle \nabla f(\vy^k),\vx^k-\vy^k\rangle + \frac{1}{2 t_k}\Vert \vx^k-\vy^k\Vert_{\textbf{D}_k}^2$}
		}
		{
			(1) update $L_k=\eta^i L_k$ and set $t_k = 1/L_k$;	\\
			(2) pick $\beta_k \in [0,1)$ and $\textbf{D}_k \in \mathbb{S}_{++}^n$, calculate $\vy^k =  {\rm \Pi}_Y^{\textbf{D}_k}\left( \vx^{k-1}+ \beta_k(\vx^{k-1}-\vx^{k-2}) \right)$;\\
			(3)	compute  $\vx^k = \text{Prox}_{ t_k g}^{\textbf{D}_k}(\vy^k- t_k  \textbf{D}_k^{-1} [\nabla f(\vy^k)-\vh^{k-1}])$;\\
			(4) update $i=i+1$;
		} 
	}	
\end{algorithm}
We describe in Algorithm \ref{algo:ASpDCAe} our proposed algorithm SPDCAe for solving problem \eqref{P}. The parameters $\beta_k$, $L_k$, and $\bD_k$ remain to be specified. Later, we will prove that with suitable selections of parameters, any limit point of the sequence generated by SPDCAe, denoted as $\vx^*$, is a critical point of problem \eqref{P}, i.e., 
$$\textbf{0} \in \nabla f(\vx^*) + \partial g(\vx^*) - \partial h(\vx^*).$$ At the moment, we just assume the following for the scaling matrices $\{\bD_k\}$ and defer the specified selection to the numerical part in Sect. \ref{sec:5}.
\begin{assumption}
	\label{assumDk}
	The scaling matrices $\{\bD_k\}$ satisfy that $$\{\bD_k\} \subseteq \bD_{\mu},\quad \bD_{k+1} \preceq (1+\eta_k)\bD_k, \quad k=1,2,\cdots,$$  where $\mu>0$ and $\{\eta_k\}$ is a nonnegative sequence such that $\sum_{k=1}^{\infty} \eta_k < \infty$.
\end{assumption}
\begin{remark}
\label{remark:3.1}
Assumption \ref{assumDk} implies that there exists $c>0$ such that 
$$
\mu\textbf{I} \preceq \bD_{k} \preceq c \bD_1,\quad k=1,2,\cdots,
$$
where $c = \prod_{k=1}^{\infty}(1+\eta_k)$.
\end{remark}
To determine $\beta_k$ and $L_k$, we distinguish the next two backtracking strategies: 

\begin{itemize}
	\item[$\bullet$] \textbf{Non-monotone backtracking} Here, we use the heuristic method suggested in \cite{AdaptiveFISTA} to determine $\{L_k\}$. Specifically, let $T_1$ be a positive integer, the initial guess $L_k$ (as in line 2 of Algorithm \ref{algo:ASpDCAe}) is set as the previously returned $L_{k-1}$ if $k$ is not divisible by $T_1$ and as $\rho L_k$ otherwise, where $\rho\in(0,1)$. Next, two common methods for updating  $\{\beta_k\}$ is as follows: 
\begin{itemize}
	\item \textbf{Contract method} sets $\beta_1 = 0$ and $\beta_k = \frac{\theta_{k-1}-1}{\theta_k} \delta$ for $k\geq 2$, where $\delta \in (0,1)$ is a contractive factor (generally close to 1) and $\{\theta_k\}_{k\geq 1}$ is the sequence recursively defined by 
		\begin{equation}
		\label{updation}
	\theta_1 = 1,\quad \theta_l = \frac{1+\sqrt{1+4\theta_{l-1}^2  L_l/L_{l-1}}}{2}, \quad l=2,3,\cdots.    	
		\end{equation} 
	\item \textbf{Restart method} updates $\{\beta_k\}$ via the \emph{fixed restart scheme} or \emph{the fixed and adaptive restart scheme} proposed in \cite{restartFISTA}. Specifically, let $T_2$ be a positive integer. For the fixed restart scheme, one starts to update $\{\theta_k\}$ by \eqref{updation} and obtain $\beta_1 = 0$ and $\beta_k  = \frac{\theta_{k-1}-1}{\theta_k}$ for $k\geq 2$, then reset $\theta_{k+1} = 1$ if $T_2$ is divisible by $k$. For the fixed and adaptive restart scheme, the restart condition shall be modified as either $T_2$ is divisible by $k$ or $\langle \vx^k-\vx^{k-1}, \vy^k-\vx^k\rangle >0$.
\end{itemize}   

\item[$\bullet$] \textbf{Monotone backtracking}
In this situation, the initial trail $L_k$ is set as the previously returned $L_{k-1}$. Then, about the update of $\{\beta_k\}$, the only difference from the non-monotone one is that \eqref{updation} should be modified as $$\theta_1 = 1, \quad \theta_l = (1+\sqrt{1+4\theta_{l-1}^2})/2,\quad l=2,3,\cdots.$$
\end{itemize}

\begin{remark}
	\label{remark3.2}
	\begin{enumerate}
		\item The restart schemes as described above were also used in \cite{wen2018proximal} for their proposed  algorithm pDCAe. Moreover, our SPDCAe with monotone backtracking is exactly pDCAe if the domain of $f$ is the whole space, $L_k$ is selected as the smoothness parameter of $f$, and the scaling matrix $\textbf{D}_k$ is set as the identity matrix at each iteration.
		\item For SPDCAe with monotone backtracking, $\vx^{k-1}+ \beta_k(\vx^{k-1}-\vx^{k-2})$ can be computed out of the inner loop.
		\item The inner loop in Algorithm \ref{algo:ASpDCAe} can stop in finite step \cite{SFBEM}.
		\item For the monotone backtracking, $L_k \geq \underline{L}$ can be omitted, besides, the boundedness of $\{\bD_{k}\}$ as in Remark \ref{remark:3.1} can imply that $0<\liminf L_k \leq \limsup L_k < \infty$; while for the non-monotone one, $L_k \geq \underline{L}$ guarantees that $\liminf L_k >0 $, and thus $0<\liminf L_k \leq \limsup L_k < \infty$. 
	\end{enumerate}
\end{remark}

Next, we describe in Algorithm \ref{algo:ASFISTA} the algorithmic procedure of SFISTA with non-monotone backtracking line search (the convex version of SPDCAe with non-monotone backtracking).  
\begin{algorithm}[ht!]
	\caption{SFISTA with non-monotone backtracking for $\eqref{CP}$}
	\label{algo:ASFISTA}
	\KwIn{$\theta_0 = 1$, $t_0 = 0$, $\eta > 1$. Set $\vx^0 = \vx^{-1} \in Y \subseteq \R^n$.}
	
	\For{$k=1, 2, 3, \cdots$}
	{   		
		pick $L_k>0$;
		
		set $i = 0$;
		
		\Repeat
		{
			{\normalfont$f(\vx^k) \leq f(\vy^k) + \langle \nabla f(\vy^k),\vx^k-\vy^k\rangle + \frac{1}{2 t_k}\Vert \vx^k-\vy^k\Vert_{\textbf{D}_k}^2$}
		}
		{
			(1) update $L_k$ by $\eta^i L_k$ and set $t_k = 1/L_k$;\\
			(2) compute $\theta_k = \frac{1+\sqrt{1+4\theta_{k-1}^2  t_{k-1}/t_{k}}}{2}$;	\\
			(3) pick $\textbf{D}_k \in \mathbb{S}_{++}^n$, calculate $\vy^k =  {\rm \Pi}_Y^{\textbf{D}_k}\left( \vx^{k-1}+\frac{\theta_{k-1}-1}{\theta_k}(\vx^{k-1}-\vx^{k-2}) \right)$;\\
			(4)	compute  $\vx^k = \text{Prox}_{t_k g}^{\textbf{D}_k}(\vy^k- t_k  \textbf{D}_k^{-1} \nabla f(\vy^k) )$;\\
			(5) update $i$ by $i+1$.
			
		}
	}	
\end{algorithm}
Note that SFISTA with monotone backtracking is exactly SFBEM proposed in \cite{SFBEM} and thus we omit to present it here. In \cite{AdaptiveFISTA},  Scheinberg et al. showed that for some practical applications, such as the compressed sensing and the logistic regression problems, the local Lipschitz constant of the gradient of loss function is generally smaller than the global one, especially when approaching the tail. Thus, the monotone backtracking will lead to slow convergence near the solution set, while the non-monotone backtracking helps to overcome this issue to some extent. Later, we will show that SFISTA is equipped with the optimal convergence rate $\Phi(\vx^k)-\Phi^*\leq \mathcal{O}(1/k^2)$, which coincides with that of SFBEM and the classical FISTA \cite{FISTA}.

%\begin{algorithm}[ht!]
%	\caption{ASFISTA for $\eqref{CP}$}
%	\label{algo:ASFISTA}
%	\KwIn{$\gamma_0 = 1, \eta > 1, 0 < \underline{L} < \overline{L}.$ Set $\vx^0 = \vx^{-1} \in Y \subseteq \R^n$.}
%	
%	\For{$k=1, 2, 3, \cdots$}
%	{   		
%		pick $L_k \in [\underline{L},\overline{L}]$;
%		
%		set $i = 0$;
%		
%		\Repeat
%		{
%			{\normalfont$f(\vx^k) \leq f(\vy^k) + \langle \nabla f(\vy^k),\vx^k-\vy^k\rangle + \frac{1}{2 t_k}\Vert \vx^k-\vy^k\Vert_{D_k}^2$}
%		}
%		{
%			(1) pick $D_k \in S_{++}^n$, set $\gamma_k \in (0,1]$ and $t_k = 1/(\eta^i L_k)$;	\\
%			(2) calculate $\vy^k =  \Pi_{Y}^{D_k}\left( \vx^{k-1}+\frac{\gamma_k(1-\gamma_{k-1})}{\gamma_{k-1}}(\vx^{k-1}-\vx^{k-2}) \right)$;\\
%			(3)	compute  $\vx^k = \text{Prox}_{t_k g}^{D_k}(\vy^k- t_k  D_k^{-1} \nabla f(\vy^k) )$;\\
%			(4) update $i$ by $i+1$.
%			
%		}
%	}	
%\end{algorithm}

\section{Convergence analysis}
\label{sec:4}
In this section, we will investigate the convergence property of  SPDCAe for \eqref{P} and its convex version SFISTA for \eqref{CP}. For the former, we demonstrate the subsequential convergence to a critical point; while for the latter, the optimal convergence rate $\Phi(\vx^k)-\Phi^*\leq \mathcal{O}(1/k^2)$ is established.

The convergence analyses of both SPDCAe and SFISTA are based on the following key inequality. 
\begin{proposition}
	\label{conv_pro}
	Suppose that $f$, $g$, and $h$ satisfy properties (a), (b), and (c) of Assumption \ref{algo:ASpDCAe}. For any ${\normalfont \vx \in \R^n, \vh \in \partial h(\vx), \vy \in Y}$, $\bD \in \mathbb{S}_{++}^n$, and $t>0$ satisfying
	\begin{equation}
	\label{prop_in1}
	{\normalfont f(\bar{\vy}) \leq f(\vy) + \langle \nabla f(\vy),\bar{\vy}-\vy\rangle + \frac{1}{2 t}\Vert \bar{\vy}-\vy\Vert_{\textbf{D}}^2}, 	
	\end{equation}
	where ${\normalfont \bar{\vy}:= \text{Prox}_{t g}^{\textbf{D}}(\vy- t  \textbf{D}^{-1} [\nabla f(\vy) -\vh ] )}$, it holds that 
	$${\normalfont F(\bar{\vy}) \leq F(\vx) + \frac{1}{2t}\Vert \vx -\vy \Vert_{\textbf{D}}^2 -\frac{1}{2t}\Vert \vx-\bar{\vy} \Vert_{\textbf{D}}^2
	}.$$
\end{proposition}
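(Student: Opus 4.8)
The plan is to obtain the claimed inequality by summing three elementary estimates—one attached to each of $f$, $g$, and $h$—and then absorbing the leftover quadratic terms through a $\bD$-weighted three-point identity. First note two trivial reductions: if $\vx\notin\dom(g)$ then $F(\vx)=+\infty$ and the bound is vacuous, so I may assume $\vx\in\dom(g)$; moreover $\partial h(\vx)\neq\emptyset$ since $h$ is finite and convex, and $\nabla f(\vy)$ exists because $\vy\in Y\subseteq\text{int}(\dom f)$ by Assumption \ref{intro:assump1}(b).

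The first estimate extracts a subgradient of $g$ from the proximal step. Applying the $\bD$-weighted form of Lemma \ref{lem:2.2beck} to $\bar{\vy}=\text{Prox}_{tg}^{\bD}(\vy-t\bD^{-1}[\nabla f(\vy)-\vh])$ gives $\vy-t\bD^{-1}[\nabla f(\vy)-\vh]-\bar{\vy}\in t\bD^{-1}\partial g(\bar{\vy})$, which after multiplying by $\tfrac{1}{t}\bD$ reads $\xi:=\tfrac{1}{t}\bD(\vy-\bar{\vy})-\nabla f(\vy)+\vh\in\partial g(\bar{\vy})$. Convexity of $g$ then yields $g(\bar{\vy})\le g(\vx)+\langle\xi,\bar{\vy}-\vx\rangle$. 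The second estimate uses $\vh\in\partial h(\vx)$ and convexity of $h$ to get $-h(\bar{\vy})\le -h(\vx)-\langle\vh,\bar{\vy}-\vx\rangle$. For the third, I retain the backtracking inequality \eqref{prop_in1} as the bound for $f(\bar{\vy})$, and I keep in reserve the gradient inequality $f(\vy)+\langle\nabla f(\vy),\vx-\vy\rangle\le f(\vx)$ coming from convexity of $f$.

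Summing these through $F(\bar{\vy})=f(\bar{\vy})+g(\bar{\vy})-h(\bar{\vy})$, the two $\vh$-contributions cancel outright, and the $\nabla f(\vy)$-contributions collapse via $\langle\nabla f(\vy),\bar{\vy}-\vy\rangle-\langle\nabla f(\vy),\bar{\vy}-\vx\rangle=\langle\nabla f(\vy),\vx-\vy\rangle$ into precisely the term I can then dominate by $f(\vx)$ through the gradient inequality for $f$. What remains is $F(\bar{\vy})\le F(\vx)+\tfrac{1}{2t}\Vert\bar{\vy}-\vy\Vert_{\bD}^2+\tfrac{1}{t}\langle\bD(\vy-\bar{\vy}),\bar{\vy}-\vx\rangle$, so the whole problem reduces to a purely algebraic identity.

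The remaining and most delicate step is verifying that this trailing quadratic expression equals $\tfrac{1}{2t}\Vert\vx-\vy\Vert_{\bD}^2-\tfrac{1}{2t}\Vert\vx-\bar{\vy}\Vert_{\bD}^2$, i.e. the $\bD$-weighted three-point (law-of-cosines) identity. I would prove this by expanding all four occurrences of $\langle\bD\,\cdot\,,\cdot\rangle$ and checking that the pure cross terms cancel, crucially invoking $\bD=\bD^\top$ to identify $\langle\bD\vy,\bar{\vy}\rangle=\langle\bD\bar{\vy},\vy\rangle$; the two sides then coincide term by term. I expect this sign-and-symmetry bookkeeping to be the only genuine hurdle—everything preceding it is just assembling the convexity inequalities and the hypothesis \eqref{prop_in1} in the correct order.
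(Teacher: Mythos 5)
Your proposal is correct and takes essentially the same route as the paper's own proof: both extract the subgradient $\tfrac{1}{t}\bD(\vy-\bar{\vy})-\nabla f(\vy)+\vh\in\partial g(\bar{\vy})$ from the proximal step, combine the resulting subgradient inequality for $g$ with \eqref{prop_in1}, the gradient inequality $f(\vy)+\langle\nabla f(\vy),\vx-\vy\rangle\le f(\vx)$, and the subgradient inequality for $h$ at $\vx$, and close with the $\bD$-weighted three-point identity. The only cosmetic difference is that you apply the $h$-inequality up front so the $\vh$-terms cancel in the sum, whereas the paper carries $-\langle\vx-\bar{\vy},\vh\rangle-h(\bar{\vy})+h(\vx)$ through its chain and discards it at the end.
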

\begin{proof}
	It follows from $\bar{\vy} = \text{Prox}_{t g}^{\bD}(\vy- t  \bD^{-1} [\nabla f(\vy)-\vh])$ that $\vy- t  \bD^{-1} [\nabla f(\vy)-\vh] - \bar{\vy} \in t\textbf{D}^{-1}\partial g(\bar{\vy})$ (term 3 of Lemma \ref{lem:2.2beck}), which has the equivalent form
		\begin{equation*}
		\label{next001}
		\bD(\vy-\bar{\vy})-t[\nabla f(\vy)-\vh]\in t \partial g(\bar{\vy}).
	\end{equation*}
Then, it holds that 
\begin{equation}
\label{eq:0002}
tg(\vx) \geq  tg(\bar{\vy}) + \langle \vx - \bar{\vy},\textbf{D}(\vy-\bar{\vy}) -t[\nabla f(\vy)-\vh]\rangle.	
\end{equation}
Thus, we have 
\begin{align*}
	&F(\bar{\vy})  = f(\bar{\vy}) + g(\bar{\vy}) - h(\bar{\vy}) \\
	\overset{\eqref{prop_in1},\eqref{eq:0002}}{\leq} & f(\vy) + \langle \nabla f(\vy),\bar{\vy}-\vy\rangle + \frac{1}{2 t}\Vert \bar{\vy}-\vy\Vert_{\textbf{D}}^2 \\
	&+g(\vx) -  \frac{1}{t} \langle \vx-\bar{\vy},\bD(\vy-\bar{\vy})-t[\nabla f(\vy)-\vh] \rangle -h(\bar{\vy}) \\
	=& f(\vy) + g(\vx) + \langle  \nabla f(\vy),\vx-\vy \rangle + \frac{1}{t}\langle \vx-\bar{\vy},\bD(\bar{\vy}-\vy)\rangle +\frac{1}{2 t}\Vert \bar{\vy}-\vy\Vert_{\bD}^2 \\
	& -\langle \vx - \bar{\vy},\vh\rangle - h(\bar{\vy}) \\
	\leq& F(\vx) +\frac{1}{t}\langle \vx-\bar{\vy},\bD(\bar{\vy}-\vy)\rangle +\frac{1}{2 t}\Vert \bar{\vy}-\vy\Vert_{\bD}^2  -\langle \vx - \bar{\vy},\vh\rangle - h(\bar{\vy}) +h(\vx) \\
	\leq& F(\vx) + \frac{1}{t}\langle \vx-\bar{\vy},\bD(\bar{\vy}-\vy)\rangle +\frac{1}{2 t}\Vert \bar{\vy}-\vy\Vert_{\bD}^2 \\
	=& F(\vx) + \frac{1}{2t}\Vert \vx -\vy \Vert_{\textbf{D}}^2 -\frac{1}{2t}\Vert \vx-\bar{\vy} \Vert_{\textbf{D}}^2,
\end{align*}
where the second inequality follows by $f(\vy) +\langle f(\vy),\vx-\vy\rangle \leq f(\vx)$, the last inequality holds since $\vh \in \partial h(\vx)$ and thus $h(\bar{\vy}) \geq h(\vx) + \langle \bar{\vy}-\vx,\vh\rangle$.  \qed
	
\end{proof}
%\overset{\eqref{eq:3.5.10}}{\leq}
\subsection{Subsequential convergence of SPDCAe}
Now, we start to establish the subsequential convergence of SPDCAe with non-monotone backtracking and the update rule for $\{\beta_k\}$ as introduced in Sect. \ref{sec:3}. The proof for SPDCAe with monotone backtracking follows similarly and thus omitted.
\begin{theorem}
	Suppose that Assumption \ref{intro:assump1} and Assumption \ref{assumDk} hold. Let {\normalfont $\{\vx_k\}$} be the sequence generated by Algorithm \ref{algo:ASpDCAe}. Then, the following statements hold: 
	\begin{enumerate}
		\item[(i)] The sequence ${\normalfont \{F(\vx^k)\}}$ converges and {\normalfont$\lim\limits_{k\rightarrow \infty}{\Vert \textbf{x}^{k}-\textbf{x}^{k-1}\Vert  = 0}$};
		\item[(ii)] If {\normalfont$\{\textbf{x}^k\}$} is bounded, then any limit point of {\normalfont$\{\textbf{x}^k\}$} is a critical point of \eqref{P}. 
	\end{enumerate}
\end{theorem}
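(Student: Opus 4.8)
The plan is to convert the one-step estimate of Proposition \ref{conv_pro} into a telescoping Lyapunov inequality. First I would invoke Proposition \ref{conv_pro} at iteration $k$ with $\vx=\vx^{k-1}$, $\vh=\vh^{k-1}\in\partial h(\vx^{k-1})$, $\vy=\vy^k$, $\bD=\bD_k$, $t=t_k$, and $\bar{\vy}=\vx^k$; its hypothesis \eqref{prop_in1} is precisely the backtracking exit test, so this yields $F(\vx^k)\le F(\vx^{k-1})+\frac{1}{2t_k}\Vert\vx^{k-1}-\vy^k\Vert_{\bD_k}^2-\frac{1}{2t_k}\Vert\vx^{k-1}-\vx^k\Vert_{\bD_k}^2$. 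Since $\vx^{k-1}\in Y$ (every prox step lands in $\dom(g)\subseteq Y$, and $\vx^0=\vx^{-1}\in Y$) and ${\rm \Pi}_Y^{\bD_k}$ is nonexpansive in $\Vert\cdot\Vert_{\bD_k}$ (Lemma \ref{lem:2.2beck} applied to $\delta_Y$ in the $\bD_k$-inner product), the extrapolation step obeys $\Vert\vy^k-\vx^{k-1}\Vert_{\bD_k}\le\beta_k\Vert\vx^{k-1}-\vx^{k-2}\Vert_{\bD_k}$. Writing $r_k:=\frac{1}{2t_k}\Vert\vx^k-\vx^{k-1}\Vert_{\bD_k}^2$ and $\phi_k:=F(\vx^k)-F^*\ge0$, and inserting $\bD_k\preceq(1+\eta_{k-1})\bD_{k-1}$ from Assumption \ref{assumDk}, I obtain $\phi_k+r_k\le\phi_{k-1}+\beta_k^2\frac{L_k}{L_{k-1}}(1+\eta_{k-1})\,r_{k-1}$. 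The decisive algebraic fact is that \eqref{updation} is equivalent to $\theta_{k-1}^2 L_k/L_{k-1}=\theta_k(\theta_k-1)$, whence $\beta_k^2\frac{L_k}{L_{k-1}}=\delta^2\frac{(\theta_{k-1}-1)^2(\theta_k-1)}{\theta_k\theta_{k-1}^2}\le\delta^2<1$, giving the clean recursion $\phi_k+r_k\le\phi_{k-1}+\delta^2(1+\eta_{k-1})\,r_{k-1}$.

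For part (i) I would exploit $\delta^2<1$ together with $\eta_{k-1}\to0$: fix $q\in(\delta^2,1)$ and an index $k_0$ with $\delta^2(1+\eta_{k-1})\le q$ for all $k\ge k_0$, and set $S_k:=\phi_k+q\,r_k$. Then for $k\ge k_0$, $S_k\le\phi_k+r_k\le\phi_{k-1}+q\,r_{k-1}=S_{k-1}$, so $\{S_k\}$ is nonincreasing and nonnegative, hence convergent. Moreover $(1-q)r_k\le S_{k-1}-S_k$ telescopes to $\sum_k r_k<\infty$, so $r_k\to0$; since $r_k\ge\frac{\underline{L}\mu}{2}\Vert\vx^k-\vx^{k-1}\Vert^2$ by $\bD_k\succeq\mu\textbf{I}$ and $L_k\ge\underline{L}$, it follows that $\Vert\vx^k-\vx^{k-1}\Vert\to0$; and $\phi_k=S_k-q\,r_k$ converges, so $\{F(\vx^k)\}$ converges. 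This self-bounding choice of $S_k$ deliberately avoids any boundedness hypothesis, though Lemma \ref{ine1} could serve as an alternative once $\{r_k\}$ is known to be bounded.

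For part (ii), let $\vx^{k_j}\to\vx^*$; then $\vx^{k_j-1},\vx^{k_j-2}\to\vx^*$ and $\vy^{k_j}\to\vx^*$ by part (i). The scaled optimality rule (Lemma \ref{lem:2.2beck}) rearranges $\vx^k=\text{Prox}_{t_k g}^{\bD_k}(\vy^k-t_k\bD_k^{-1}[\nabla f(\vy^k)-\vh^{k-1}])$ into $\xi^k+\vh^{k-1}-\nabla f(\vx^k)\in\partial g(\vx^k)$, where $\xi^k:=\frac{1}{t_k}\bD_k(\vy^k-\vx^k)-\nabla f(\vy^k)+\nabla f(\vx^k)$. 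Using $\Vert\vy^k-\vx^k\Vert\to0$, the bounds $\mu\textbf{I}\preceq\bD_k\preceq c\bD_1$ (Remark \ref{remark:3.1}) and $0<\liminf L_k\le\limsup L_k<\infty$, and $L$-Lipschitzness of $\nabla f$ on $Y$, I would show $\xi^k\to0$ and $\nabla f(\vx^{k_j})\to\nabla f(\vx^*)$. Local boundedness of $\partial h$ lets me pass to a further subsequence with $\vh^{k_j-1}\to\vh^*\in\partial h(\vx^*)$ by closedness of the graph of $\partial h$. Finally, the graph of $\partial g$ is closed under strong convergence, obtained by taking $\liminf$ in the subgradient inequality using lower semicontinuity of $g$, so $\vh^*-\nabla f(\vx^*)\in\partial g(\vx^*)$, i.e.\ $\zero\in\nabla f(\vx^*)+\partial g(\vx^*)-\partial h(\vx^*)$.

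The main obstacle is the uniform contraction $\beta_k^2\frac{L_k}{L_{k-1}}(1+\eta_{k-1})<1$ that drives the Lyapunov step: it rests on the $\theta_k$-identity extracted from \eqref{updation} and on controlling the metric drift $(1+\eta_{k-1})$ via Assumption \ref{assumDk}, while the restart variants need the same bound established over a restart window, where the periodic resetting $\theta_{k+1}=1$ forces $\beta_k=0$ and hence unconditional descent. Once this estimate is in place, both conclusions follow from standard lower-semicontinuity and closed-graph arguments.
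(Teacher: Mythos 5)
Your proposal is correct and follows the paper's route almost step for step: the same application of Proposition \ref{conv_pro} with the backtracking exit test serving as hypothesis \eqref{prop_in1}, the same nonexpansiveness bound \eqref{next2} for the scaled projection (valid since $\vx^{k-1}\in\dom(g)\subseteq Y$), the same identity $\theta_{k-1}^2 L_k/L_{k-1}=\theta_k(\theta_k-1)$ producing a contraction factor bounded away from $1$, and the same local-boundedness/closed-graph argument for part (ii). The only divergence is how the recursion is closed in part (i): the paper forms $\widetilde{F}(\vx^k)=F(\vx^k)-F^*+\frac{1}{2t_k}\Vert\vx^k-\vx^{k-1}\Vert_{\bD_k}^2$, absorbs the metric drift as a factor $(1+\eta_{k-1})$ multiplying the whole Lyapunov quantity, and invokes Lemma \ref{ine1}; you instead fix $q\in(\delta^2,1)$, use $\eta_k\to 0$ to get $\delta^2(1+\eta_{k-1})\le q$ beyond a finite index, and show $S_k=\phi_k+q\,r_k$ is eventually nonincreasing. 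Your variant is self-contained (no appeal to the Bertsekas lemma) and yields the slightly stronger conclusion $\sum_k r_k<\infty$ rather than just $r_k\to 0$; the paper's variant needs no threshold index because the summable perturbations are handled wholesale by Lemma \ref{ine1}. One caveat: your headline bound $\beta_k^2 L_k/L_{k-1}\le\delta^2<1$ is specific to the contract rule. For the restart rules (where the paper's $\alpha_k=(1-1/\theta_k)(1-1/\theta_{k-1})^2$ carries no $\delta$), what is actually needed is $\sup_k\alpha_k<1$, which follows from $\theta_k$ being bounded within each restart window of length $T_2$ (using $\underline{L}\le L_k\le\limsup L_k<\infty$ to bound the ratios $L_l/L_{l-1}$); your remark that the reset forces $\beta_k=0$ at restart points is true but not by itself sufficient. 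The paper glosses this point too (it asserts $\sup_k\alpha_k<1$ as clear), so this is a shared, minor omission rather than a gap peculiar to your argument.
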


\begin{proof}
	$(i)$ Let $k\geq 1$. Replace $\vx$, $\vh$, $\vy$, $\bar{\vy}$, $\bD$, $t$ in Proposition \ref{conv_pro} by $\vx^{k-1}$, $\vh^{k-1}$, $\vy^k$, $\vx^k$, $\bD_{k}$, $t_k$, respectively. Then, we obtain
	\begin{equation}
		\label{them_ine_01}
		F(\vx^k) \leq F(\vx^{k-1}) + \frac{1}{2t_k}\Vert \vx^{k-1}-\vy^k\Vert_{\textbf{D}_k}^2 - \frac{1}{2t_k}\Vert \vx^k-\vx^{k-1}\Vert_{\textbf{D}_k}^2.
	\end{equation}
	Note that $\vy^k =  {\rm \Pi}_Y^{\textbf{D}_k}\left( \vx^{k-1}+\beta_k (\vx^{k-1}-\vx^{k-2}) \right)$ and $\vx^{k-1}\in Y$, thus we have %\redtext{\delta \times}\frac{\gamma_k(1-\gamma_{k-1})}{\gamma_{k-1}}  
	\begin{equation}
		\label{next2}
		\Vert \vx^{k-1}-\vy^k \Vert_{\bD_k}^2 \leq \beta_k^2\Vert \vx^{k-1}-\vx^{k-2}\Vert_{\bD_k}^2.
	\end{equation}
	%\delta^2 [\frac{\gamma_k (1-\gamma_{k-1})}{\gamma_{k-1}}]^2
	 Plunge \eqref{next2} into \eqref{them_ine_01} and then  consider the update rule for $\{\beta_k\}$, it is easy to check that for $k\geq 2$,
	\begin{equation*}
		%&F(\vx^1) \leq F(\vx^0) - \frac{1}{2t_1}\Vert \vx^1-\vx^0\Vert_{D_1}^2, \text{ and for } k\geq 2,\\
		F(\vx^k) \leq F(\vx^{k-1}) + 
		\frac{\alpha_k}{2t_{k-1}}
		\Vert \vx^{k-1}-\vx^{k-2}\Vert_{\bD_k}^2 - \frac{1}{2t_k}\Vert \vx^k-\vx^{k-1}\Vert_{\bD_k}^2,
	\end{equation*}
	where $\alpha_k = \delta^2(1-1/\theta_k)(1-1/\theta_{k-1})^2$ (resp. $\alpha_k = (1-1/\theta_k)(1-1/\theta_{k-1})^2$) for the contract method (resp. restart method) for updating $\{\beta_k\}$  as introduced in Sect. \ref{sec:3}.  Clearly, for either of the selection methods, it holds that $\{\alpha_k\} \subseteq [0,1)$ with $\sup_k \alpha_k < 1$. Next, denote $\widetilde{F}(\vx^{k}) = F(\vx^k)-F^* + \frac{1}{2t_k}\Vert \vx^k-\vx^{k-1}\Vert_{\bD_k}^2$, we have
	\begin{align*}
		&\widetilde{F}(\vx^{k}) = F(\vx^k)-F^* + \frac{1}{2t_k}\Vert \vx^k-\vx^{k-1}\Vert_{\bD_k}^2 \\
		\leq& F(\vx^{k-1}) - F^*+ \frac{1}{2 t_{k-1}}\Vert \vx^{k-1}-\vx^{k-2}\Vert_{\bD_k}^2 
		- \frac{1-\alpha_k}{2 t_{k-1}}	\Vert \vx^{k-1}-\vx^{k-2}\Vert_{\bD_k}^2 \\
		\leq&  F(\vx^{k-1}) - F^*+ \frac{1}{2t_{k-1}} \Vert \vx^{k-1}-\vx^{k-2}\Vert_{(1+\eta_{k-1})\bD_{k-1}}^2 - \frac{1-\alpha_k}{2 t_{k-1}}	\Vert \vx^{k-1}-\vx^{k-2}\Vert_{\bD_k}^2 \\
		\leq & (1+\eta_{k-1})\widetilde{F}(\vx^{k-1})
		- \frac{1-\alpha_k}{2t_{k-1}}	\Vert \vx^{k-1}-\vx^{k-2}\Vert_{\bD_k}^2,
	\end{align*}
where the second inequality follows by Assumption \ref{assumDk} for $\{\bD_{k}\}$.
	Based on the facts that $\sup_k \alpha_k < 1$ and $\{t_k\}$ is  bounded from above (term 4 of Remark \ref{remark3.2}), it follows that the sequence $\{\frac{1-\alpha_k}{2t_{k-1}}\}_{k\geq 2}$ has positive limit inferior.
	Then invoking Lemma  \ref{ine1}, it holds that
	\begin{equation}
		\label{final}
	\lim\limits_{k\rightarrow \infty} \Vert \vx^{k-1}-\vx^{k-2}\Vert_{\bD_k}^2 = 0,	
	\end{equation}
and $\{\widetilde{F}(\vx^k)\}$ converges to a finite value. Clearly, we have $\{F(\vx^k)\}$ converges to a finite value. Besides, $\{\bD_k\}$ satisfies that 
	$ \mu \textbf{I} \preceq \bD_k$ for $k\geq 1$.
	Thus, $\Vert \vx^{k-1}-\vx^{k-2}\Vert_{\bD_k}^2 \geq \mu \Vert \vx^{k-1}-\vx^{k-2}\Vert^2$,  then \eqref{final} implies $\lim\limits_{k\rightarrow \infty} \Vert \vx^{k-1}-\vx^{k-2}\Vert = 0$.\\
	$(ii)$ 
	Take into account that  {\normalfont$\lim\limits_{k\rightarrow \infty}{\Vert \textbf{x}^{k}-\textbf{x}^{k-1}\Vert  = 0}$}  and $ \mu \textbf{I} \preceq \bD_k$ for all $k$, we obtain from \eqref{next2} that {\normalfont$\lim\limits_{k\rightarrow \infty}{\Vert \textbf{x}^{k}-\textbf{y}^{k}\Vert  = 0}.$}  Let $\bar{\textbf{x}}$ be any limit point of $\{\textbf{x}^k\}$, then there exists a convergent subsequence such that $\lim\limits_{i\rightarrow \infty}\textbf{x}^{k_i} = \lim\limits_{i\rightarrow \infty}\textbf{y}^{k_i} = \bar{\textbf{x}}$.  Moreover, because the sequence $\{\vh^{k_i-1}\}$ is bounded in $\R^n$, we can assume without loss of generality that $\{\vh^{k_i-1}\}$ is convergent with limit $\bar{\textbf{z}}$. Then, it follows from $\vh^{k_i-1}\in \partial h(\vx^{k_i-1})$ for all $i$, $\vh^{k_i-1} \rightarrow \bar{\textbf{z}}$, $\vx^{k_i-1}\rightarrow \bar{\textbf{x}}$, and the closedness of the graph $\partial h$ that $\bar{\textbf{z}} \in \partial h(\bar{\textbf{x}})$ \cite[Theorem 24.4]{rockafellar1970convex} . On the other hand,
	$\vx^k = \text{Prox}_{t_k g}^{\bD_k}(\vy^k-t_k \bD_k^{-1}[\nabla f(\vy^k)-\vh^{k-1}])$ implies that 
	\begin{equation}
		\label{next3}
		\frac{1}{t_{k_i}}\bD_{k_i}(\vy^{k_i}-\vx^{k_i})-[\nabla f(\vy^{k_i})-\vh^{k_i-1}]\in  \partial g(\vx^{k_i}).
	\end{equation}
Consider also $\{\bD_{k}\}$ bounded from above (Remark \ref{remark:3.1}), thus the left-hand side of \eqref{next3} converges to $-\nabla f(\bar{\textbf{x}})+\bar{\textbf{z}}$. Combining this with the facts that $\{\vx_{k_i}\}\rightarrow \bar{\vx}$ and the closedness of the graph $\partial g$, we have $$-\nabla f(\bar{\textbf{x}})+\bar{\textbf{z}} \in \partial g(\bar{\vx}),$$
thus $\textbf{0} \in \nabla f(\bar{\textbf{x}})  +\partial g(\bar{\vx}) -\bar{\textbf{z}}$. Then we conclude that $\bar{\vx}$ is a critical point of \eqref{P} since $\bar{\textbf{z}} \in \partial h(\bar{\textbf{x}})$.
\qed
\end{proof}

\subsection{Optimal convergence rate $\mathcal{O}(1/k^2)$ of SFISTA}
\label{sub:3.2} 
In this part, we start to establish the optimal convergence rate of SFISTA with non-monotone backtracking (Algorithm \ref{algo:ASFISTA}) for \eqref{CP}.  

\begin{theorem}
	Suppose that Assumption \ref{intro:assump2} holds. Let {\normalfont $\{\vx_k\}$} be the sequence generated by Algorithm \ref{algo:ASFISTA}. Then, for any optimal solution  {\normalfont$\vx^*$} of \eqref{CP}, the following statements hold: 
	\begin{enumerate}
		\item[(i)] {\normalfont For $k\geq 0$, denote $\textbf{v}^k = \vx^{k-1} +\theta_k(\vx^k-\vx^{k-1})$.} Then, for $k\geq 1$ we have
		{
		\normalfont 
		\[
		t_k \theta_k^2(\Phi(\vx^k)-\Phi^*) + \frac{1}{2}\Vert \vx^*-\textbf{v}^k\Vert_{\bD_k}^2 \leq t_{k-1}\theta_{k-1}^2(\Phi(\vx^{k-1})-\Phi^*) + \frac{1}{2}\Vert \vx^*-\textbf{v}^{k-1}\Vert_{\bD_k}^2.
		\]
		}
		\item[(ii)] If $\{\bD_k\}$ satisfies Assumption \ref{assumDk}, then there exists $C>0$ such that for $k\geq 2$
		{
		\normalfont
		\[
		\Phi(\vx^k) - \Phi^* \leq \frac{C}{(k+1)^2}.
		\]	
		}
	\end{enumerate}
	
\end{theorem}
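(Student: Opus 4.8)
The plan is to prove the theorem in two stages, exactly matching the two parts of the statement: first establish the one-step recursion (i), then telescope/unroll it into the $\mathcal{O}(1/k^2)$ rate (ii). The whole argument is the scaled, non-monotone-backtracking analogue of the classical FISTA convergence proof, so I would follow that template closely while carefully tracking the variable metric $\bD_k$ and the variable step-sizes $t_k$.

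For part (i), the natural starting point is Proposition~\ref{conv_pro} specialized to the convex case $h\equiv 0$. The key idea in FISTA-type proofs is to apply the descent estimate twice with two different choices of the comparison point $\vx$: once at the optimal solution $\vx^*$ and once at the previous iterate $\vx^{k-1}$, then form the convex combination with weights $1-1/\theta_k$ and $1/\theta_k$. Concretely, setting $\bar\vy=\vx^k$, $\vy=\vy^k$, $\bD=\bD_k$, $t=t_k$, Proposition~\ref{conv_pro} (with $h\equiv0$, so $F=\Phi$) gives for any comparison point $\vx$
\[
\Phi(\vx^k)\le \Phi(\vx)+\frac{1}{2t_k}\Vert \vx-\vy^k\Vert_{\bD_k}^2-\frac{1}{2t_k}\Vert \vx-\vx^k\Vert_{\bD_k}^2.
\]
I would take the combination $(1-\tfrac{1}{\theta_k})\cdot[\text{inequality at }\vx=\vx^{k-1}]+\tfrac{1}{\theta_k}\cdot[\text{inequality at }\vx=\vx^{*}]$, multiply through by $t_k\theta_k^2$, and rearrange. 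The crucial algebraic step is to recognize that the combination of the $\Vert\cdot-\vy^k\Vert_{\bD_k}^2$ terms collapses, via the identity $\vy^k=\vx^{k-1}+\frac{\theta_{k-1}-1}{\theta_k}(\vx^{k-1}-\vx^{k-2})$ together with the definitions $\textbf{v}^k=\vx^{k-1}+\theta_k(\vx^k-\vx^{k-1})$ and the recursion $\theta_k^2 t_k=\theta_k(\theta_k-1)t_k+\theta_k t_k$ coming from line~(2) of Algorithm~\ref{algo:ASFISTA} (which gives $t_k\theta_k^2-t_k\theta_k=t_{k-1}\theta_{k-1}^2$). After this rearrangement, the quadratic terms should assemble into $\tfrac12\Vert\vx^*-\textbf{v}^k\Vert_{\bD_k}^2$ on the left and $\tfrac12\Vert\vx^*-\textbf{v}^{k-1}\Vert_{\bD_k}^2$ on the right, yielding exactly the claimed recursion. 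One subtlety I would handle carefully is the projection step $\vy^k=\Pi_Y^{\bD_k}(\cdots)$: since $\vx^*\in Y$ and $\vx^{k-1}\in Y$, the $\bD_k$-nonexpansiveness of the projection lets me bound $\Vert\vx-\vy^k\Vert_{\bD_k}$ by the distance to the unprojected extrapolation point, which is what makes the identity close.

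For part (ii), define the Lyapunov quantity $u_k:=t_k\theta_k^2(\Phi(\vx^k)-\Phi^*)+\tfrac12\Vert\vx^*-\textbf{v}^k\Vert_{\bD_k}^2$. The recursion in (i) is not quite $u_k\le u_{k-1}$ because the metric on the right-hand side is $\bD_k$, not $\bD_{k-1}$. Here Assumption~\ref{assumDk} does the work: $\bD_k\preceq(1+\eta_{k-1})\bD_{k-1}$ lets me replace $\Vert\vx^*-\textbf{v}^{k-1}\Vert_{\bD_k}^2$ by $(1+\eta_{k-1})\Vert\vx^*-\textbf{v}^{k-1}\Vert_{\bD_{k-1}}^2$, giving $u_k\le(1+\eta_{k-1})u_{k-1}$ (after also using $\Phi(\vx^{k-1})-\Phi^*\ge0$ to absorb the factor into the first term). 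Since $\sum\eta_k<\infty$, the product $\prod(1+\eta_{k-1})$ is bounded by some constant $c>0$ (exactly as in Remark~\ref{remark:3.1}), so $u_k\le c\,u_1$ for all $k$, i.e. $\{u_k\}$ is uniformly bounded. It then remains to lower-bound $t_k\theta_k^2$ by a multiple of $(k+1)^2$. This is the standard FISTA estimate: from $\theta_k=\frac{1+\sqrt{1+4\theta_{k-1}^2 t_{k-1}/t_k}}{2}$ one gets $t_k\theta_k^2=t_{k-1}\theta_{k-1}^2+t_k\theta_k\ge t_{k-1}\theta_{k-1}^2+\sqrt{t_k\theta_k^2}\,\sqrt{t_k}$; combined with $t_k\ge1/(\eta L_{\max})$ bounded below (from Remark~\ref{remark3.2}, item~4, which guarantees $\limsup L_k<\infty$) an induction yields $\sqrt{t_k\theta_k^2}\ge \tfrac12\sqrt{t_{\min}}\,(k+1)$, hence $t_k\theta_k^2\ge \tfrac14 t_{\min}(k+1)^2$. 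Putting these together, $\Phi(\vx^k)-\Phi^*\le u_k/(t_k\theta_k^2)\le \frac{c\,u_1}{\tfrac14 t_{\min}(k+1)^2}=:\frac{C}{(k+1)^2}$.

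The main obstacle I anticipate is the bookkeeping in part (i): getting the cross terms from the two quadratic expansions to combine cleanly into $\Vert\vx^*-\textbf{v}^k\Vert_{\bD_k}^2$ requires the $\theta_k$-recursion and the definitions of $\textbf{v}^k$ and $\vy^k$ to align precisely, and the presence of a \emph{single} fixed metric $\bD_k$ inside the convex combination (rather than two different metrics) is essential — this is why the combination is taken before invoking Assumption~\ref{assumDk}, and why the metric mismatch is deferred entirely to part (ii). The projection onto $Y$ in the variable metric $\bD_k$ is a secondary technical point, but since all comparison points lie in $Y$, the generalized nonexpansiveness of $\Pi_Y^{\bD_k}$ handles it without difficulty.
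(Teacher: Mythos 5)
Your proposal is correct and follows essentially the same route as the paper's proof: part (i) applies Proposition \ref{conv_pro} (with $h\equiv 0$) at $\vx^{k-1}$ and $\vx^*$, combines with weights $1-\theta_k^{-1}$ and $\theta_k^{-1}$, handles the projection via $\bD_k$-nonexpansiveness toward the point $(1-\theta_k^{-1})\vx^{k-1}+\theta_k^{-1}\vx^*\in Y$, and closes with the recursion $t_k(\theta_k^2-\theta_k)=t_{k-1}\theta_{k-1}^2$; part (ii) uses the same Lyapunov quantity, Assumption \ref{assumDk}, and the standard lower bound $t_k\theta_k^2\geq \tfrac14 t_{\min}(k+1)^2$. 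The only cosmetic differences are that you bound the Lyapunov sequence by the convergent product $\prod(1+\eta_k)$ where the paper invokes Lemma \ref{ine1}, and you obtain the growth of $\sqrt{t_k}\,\theta_k$ by induction where the paper telescopes $\sqrt{t_k}\theta_k\geq\sqrt{t_{k-1}}\theta_{k-1}+\tfrac12\sqrt{t_k}$ --- both are equivalent.
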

\begin{proof}
	$(i)$ Let $k\geq 1$. Note that in this situation $h\equiv 0$ and $\partial h(\vx) = \{\textbf{0}\}$. Replace $\vh$, $\vy$, $\bar{\vy}$, $\bD$, $t$ in Proposition \ref{conv_pro} by $\textbf{0}$, $\vy^k$, $\vx^k$, $\bD_{k}$, $t_k$, respectively. Then, we obtain 
	\begin{align}
		\label{cvx_1}
		\Phi(\vx^k) &\leq \Phi(\vx) + \frac{1}{2t_k}\Vert \vx -\vy^k \Vert_{\textbf{D}_k}^2 -\frac{1}{2t_k}\Vert \vx-\vx^k \Vert_{\textbf{D}_k}^2 \nonumber \\  
		&=\Phi(\vx)+ \frac{1}{t_k}\langle \vx-\vx^k,\bD_k(\vx^k-\vy^k)\rangle +\frac{1}{2t_k}\Vert \vx^k-\vy^k\Vert_{\bD_k}^2.
	\end{align}
Set $\vx=\vx^{k-1}$ and $\vx=\vx^*$ respectively in \eqref{cvx_1}, it follows that
	\begin{align*}
		\Phi(\vx^k) \leq & \Phi(\vx^{k-1})  + \frac{1}{t_k}\langle \vx^{k-1}-\vx^k,\bD_k(\vx^k-\vy^k)\rangle +\frac{1}{2t_k}\Vert \vx^k-\vy^k\Vert_{\bD_k}^2, \\
		\Phi(\vx^k) \leq & \Phi^* +  \frac{1}{t_k}\langle \vx^*-\vx^k,\bD_k(\vx^k-\vy^k)\rangle +\frac{1}{2t_k}\Vert \vx^k-\vy^k\Vert_{\bD_k}^2.
	\end{align*} 
	Multiply the above two inequalities by $1-\theta_k^{-1}$ and $\theta_k^{-1}$ respectively and next add them together, then we obtain 
	\begin{align}
		\label{convex_key}
		\Phi(\vx^k)-\Phi^* \leq& (1-\theta_k^{-1})(\Phi(\vx^{k-1})-\Phi^*) + \frac{1}{2t_k}\Vert \vx^k-\vy^k\Vert_{\bD_k}^2 \nonumber \\
		&+ \frac{1}{t_k}\langle (1-\theta_k^{-1})\vx^{k-1}-\vx^k+\theta_k^{-1} \vx^*,\bD_k(\vx^k-\vy^k) \rangle \nonumber \\
		=& (1-\theta_k^{-1})(\Phi(\vx^{k-1})-\Phi^*) +\frac{1}{2t_k}\Vert (1-\theta_k^{-1})\vx^{k-1} + \theta_k^{-1} \vx^*- \vy^k\Vert_{\bD_k}^2 \nonumber \\
		&- \frac{1}{2t_k} \Vert (1-\theta_k^{-1})\vx^{k-1}+\theta_k^{-1} \vx^*- \vx^k\Vert_{\bD_k}^2.
	\end{align}
%where the equality follows by $2\langle (1-\theta_k^{-1})\vx^{k-1} + \theta_k^{-1} \vx^*- \vx^k,\bD_k(\vy^k- \vx^k) \rangle = \Vert (1-\theta_k^{-1})\vx^{k-1}+\theta_k^{-1} \vx^*-\vx^k\Vert_{\bD_k}^2 + \Vert \vy^k-\vx^k\Vert_{\bD_k}^2 - \Vert (1-\theta_k^{-1})\vx^{k-1}+\theta_k^{-1} \vx^*-\vy^k\Vert_{\bD_{k}}^2$. 
	 Note that 
	\begin{align*}
		\vy^k &= {\rm \Pi}_Y^{\textbf{D}_k} \left( \vx^{k-1}+\frac{\theta_{k-1}-1}{\theta_k}(\vx^{k-1}-\vx^{k-2}) \right) \\
		&= {\rm \Pi}_Y^{\textbf{D}_k} \left( (1-\theta_k^{-1})\vx^{k-1} + \theta_k^{-1} \textbf{v}^{k-1}\right) 
	\end{align*}
	and $(1-\theta_k^{-1})\vx^{k-1} + \theta_k^{-1} \vx^* \in Y$, thus
	\begin{align}
		\label{projection}
		\Vert (1-\theta_k^{-1})\vx^{k-1} + \theta_k^{-1} \vx^* - \vy^k\Vert_{\bD_k}^2 \leq \Vert \theta_k^{-1}(\vx^*-\textbf{v}^{k-1})\Vert_{\bD_k}^2,
	\end{align}
	moreover, $\textbf{v}^k = \vx^{k-1} +\theta_k(\vx^k-\vx^{k-1})$ implies that 
	\begin{equation}
		\label{convex_1}
		\Vert (1-\theta_k^{-1})\vx^{k-1}+\theta_k^{-1} 	\vx^*-\vx^k\Vert_{\bD_k}^2 = \Vert \theta_k^{-1}(\vx^*-\textbf{v}^k)\Vert_{\bD_k}^2.	
	\end{equation}
	Thus, combining \eqref{convex_key}, \eqref{projection}, and \eqref{convex_1}, we have
	\begin{equation*}
		t_k \theta_k^2(\Phi(\vx^k)-\Phi^*) + \frac{1}{2}\Vert \vx^*-\textbf{v}^k\Vert_{\bD_k}^2 \leq t_k(\theta_k^2-\theta_k)(\Phi(\vx^{k-1})-\Phi^*) + \frac{1}{2}\Vert \vx^*-\textbf{v}^{k-1}\Vert_{\bD_k}^2.
	\end{equation*}
	Moreover, the update of $\{\beta_k\}$ (line 6 of Algorithm \ref{algo:ASFISTA}) implies that $\theta_k^2-\theta_k - t_{k-1}/t_k \theta_{k-1}^2 = 0$. Plunge this into the above inequality, we prove $(i)$. \\
	$(ii)$ For $k\geq 1$, define $a_k = t_k\theta_{k}^2(\Phi(\vx^k)-\Phi^*)+\frac{1}{2}\Vert \vx^*-\textbf{v}^{k}\Vert_{\bD_k}^2$. Thus,
	\begin{align*}
		a_{k+1} &= t_{k+1}\theta_{k+1}^2(\Phi(\vx^{k+1})-\Phi^*)+\frac{1}{2}\Vert \vx^*-\textbf{v}^{k+1}\Vert_{\bD_{k+1}}^2 \\
		&\leq t_k \theta_k^2(\Phi(\vx^k)-\Phi^*) + \frac{1}{2}\Vert \vx^*-\textbf{v}^k\Vert_{\bD_{k+1}}^2 \\
		&\leq t_k \theta_k^2 (\Phi(\textbf{x}^k)-\Phi^*) + \frac{1+\eta_k}{2}\Vert \vx^*-\textbf{v}^k\Vert_{\bD_k}^2 \\
		&\leq (1+\eta_k)(t_k\theta_k^2(\Phi(\vx^k)-\Phi^*)+\frac{1}{2}\Vert \vx^*-\textbf{v}^k\Vert_{\bD_k}^2)\\
		&= (1+\eta_k)a_k,
	\end{align*} 
	where the first inequality follows from the targeted inequality in term $(i)$.  
	Then, invoking Lemma \ref{ine1} we conclude that the sequence $\{a_k\}$ converges and thus bounded. Let $C_1>0$ satisfy $a_k \leq C_1$ 
	for all $k$, then we have for $k\geq 1$
	\[
	\Phi(\vx^k)-\Phi^* \leq \frac{C_1}{t_k\theta_k^2}.
	\]
	Now, for $k \geq 2$ we have 
	%\sqrt{\theta_k(1-\theta_k) t_k}
	\begin{equation*}
		\sqrt{t_{k-1}}\theta_{k-1} = \sqrt{\theta_k(\theta_k-1) t_k} 
		\leq  \sqrt{t_k}\theta_k - \frac{\sqrt{t_k}}{2}.
	\end{equation*}
	%where the inequality follows from the concavity of $\sqrt{x-1}$. 
	Summing the above inequality from 2 to $k$ and then by rearranging, we have $\sqrt{t_k}\theta_k\geq \sqrt{t_1} + \frac{1}{2}\sum_{i=2}^{k}\sqrt{t_k}$. Note $t_{\text{min}}:=\inf\{t_k\} >0$, thus for $k\geq 2$, 
	\[\frac{1}{t_k\theta_k^2} \leq \frac{1}{(\sqrt{t_1}+\frac{1}{2}\sum_{i=2}^{k}\sqrt{t_k})^2} \leq \frac{4}{(k+1)^2 t_{\text{min}}}.\] 
	Then, let $C = 4C_1/t_{\text{min}}$, it holds that for all $k\geq 2$
	\[
	\Phi(\vx^k)-\Phi^* \leq \frac{C}{(k+1)^2}.
	\]
	Therefore, we obtain the optimal convergence rate.
	\qed
\end{proof}

\section{Numerical results}\label{sec:5}
In this section, we conduct numerical simulations on problems of sparse binary logistic regression and compressed sensing with Poisson noise. All experiments are implemented in MATLAB 2019a on a 64-bit PC with an Intel(R) Core(TM) i5-6200U CPU (2.30GHz) and 8GB of RAM. 

\subsection{Sparse binary logistic regression}
In this part, we consider the following DC regularized sparse binary logistic regression model:  
%\begin{equation}
%\label{copo}
%\tag{COPO}
%\begin{split}
%\min\quad & \frac{1}{2}\textbf{x}^{\top}\textbf{A}\textbf{x} \\
%\text{s.t.}\quad  & \textbf{x}\in \R_{+}^n.\\
%\end{split}
%\end{equation}
\begin{equation}
	\label{ex1}
	\min_{\vx\in \R^n} \{\frac{1}{m}\sum_{i=1}^{m} \log (1+\exp(-b_i(\textbf{a}_i^T\vx)))+ \lambda \Vert \vx \Vert_1 - \lambda \Vert \vx \Vert_2\},
\end{equation}
where $\lambda >0$ is a regularization parameter and $\{(\textbf{a}_1,b_1),\cdots,(\textbf{a}_m,b_m)\}$ is a training set with observations $\textbf{a}_i\in\R^n$ and labels $b_i \in \{-1,1\}$. This fits \eqref{P} with $f(\vx) = \sum_{i=1}^{m} \log (1+\exp(-b_i(\textbf{a}_i^T\vx)))$, $g(\vx) = \lambda \Vert \vx \Vert_1$, and $h(\vx) = \lambda \Vert \vx\Vert_2$. Here, we use $\ell_{1-2}$ regularizer \cite{Jackl12} for promoting sparsity; see \cite{reweightedl1} for alternative nonconvex regularizers. For this model, $Y$ in Algorithm \ref{algo:ASpDCAe} is the whole space and $ {\rm \Pi}_Y^{\textbf{D}_k}$ is the identity operator. Besides, the computation about the proximal map $\text{Prox}_{t_k g}^{\textbf{D}_k}$ has closed form, which can be found in \cite[Example 6.8]{Beck_1order}.

Two tested datasets, \texttt{w8a} and \texttt{CINA}, obtained from \texttt{libsvm} \cite{libsvm} are utilized to show the performances of our proposed method SPDCAe (Algorithm \ref{algo:ASpDCAe}).  We compare four algorithms for \eqref{ex1}: the first two are versions derived from SPDCAe, the other are pDCAe \cite{wen2018proximal} and ADCA \cite{nhat2018accelerated}. Note that all of these algorithms are equipped with Nesterov's extrapolation, and here wo do not compare that out of this type, such as pDCA \cite{DC_Takeda} and the
general iterative shrinkage and thresholding algorithm (GIST) \cite{pmlr-v28-gong13a}, since in general these methods do not work better than that with extrapolation; see \cite{wen2018proximal} for a comparison of pDCA, pDCAe, and GIST. Next, we discuss the implementation details of the involved algorithms below:

\begin{itemize}
	\item[$\bullet$] \textbf{SPDCAe1} is SPDCAe with scaling and non-monotone backtracking line search. The scaling matrix $\bD_k$ is selected by the following procedure.
	Let $\textbf{g}^k = \nabla f(\vy^k)$ and $\bar{\textbf{g}}^k = \sum_{i=1}^{k} \textbf{g}^k \odot \textbf{g}^k$, then set $$\bD_k = \text{diag}\left(\max\left(\frac{1}{\gamma_k},\min\left(\gamma_k,\sqrt{\bar{\textbf{g}}^k + \varepsilon \textbf{e}_n}\right)\right)\right),$$ where $\varepsilon = 10^{-6}$ and $\gamma_k = \sqrt{1+\frac{10^{13}}{(k+1)^2}}$. Here, all of the above operators are conducted componentwise. Note that the update of the scaling matrices as above is motivated by that in the adaptive gradient (AdaGrad) algorithm \cite{AdaGrad}. The selection of $\{\gamma_k\}$ can guarantee that $\{\textbf{D}_k\}$ satisfies Assumption \ref{assumDk} since $\{\textbf{D}_k\}$ converges to the identity matrix. In addition, $\underline{L}$ is set as $10^{-10}$.  The factor $\eta$ is set as 2 and the initial guesses $L_1$ for the datasets are fixed as 1; then for $k\geq 2$, the initial guess $L_k$ is selected as $L_{k-1}/2$ if $k$ is not divisible by 5, otherwise as $L_{k-1}$.
	The selection of extrapolation parameter $\{\beta_k\}$ uses the fixed ($T_2 = 200$) and adaptive restart scheme  as introduced in Sect. \ref{sec:3}.
	\item[$\bullet$]  \textbf{PDCAe1} is SPDCAe without scaling and with non-monotone backtracking line search. Compared with that implementation of SPDCAe1, two modifications are in order. First, the initial guesses $L_1$ for the datasets are set as $0.1$. Second, the scaling matrix is the identity matrix. 
	\item[$\bullet$] \textbf{pDCAe} also uses the fixed $(T_2=200)$ and restart scheme for updating $\{\beta_k\}$. The smoothness parameter of logistic loss $f$ is estimated by computing the largest eigenvalue of the Hessian matrix of $f$.
	\item[$\bullet$] \textbf{ADCA} uses the same bound of smoothness parameter of $f$ as that in pDCAe. Besides, the parameter $q$ in ADCA is set as 3. 
\end{itemize}
  
In our experiments, $\lambda$ are all set as $10^{-3}$. The initial point for all of the involved algorithms is randomly generated by the MATLAB command $\texttt{rand(n,1)}$. We stop the algorithms via the \emph{relative error} $(F(\vx^k) - F^*)/{F^*} \leq tol$, where $\vx^*$ is an approximately solution derived by running PDCAe1 for 10000 iterations and $F^*$ is its corresponding function value.
 
Table \ref{tab:1} demonstrates the performances  for \texttt{w8a} and \texttt{CINA} datasets. The total number of iterations and CPU time (in seconds) averaged on 10 runs for $tol \in \{10^{-2i}:i\in\{1,2,3,4\}\}$ are reported. The mark ``Max'' means the number of iterations exceeds 10000. We visualize  in Fig. \ref{fig:logstic} the trend of the relative error with respect to the CPU time. It is observed tat SPDCAe1 performs the best among the involved algorithms, hitting all of the tolerances with the least amount of time and the least number of iterations. On the contrary, pDCAe does not work well, especially for \texttt{CINA} dataset. For \texttt{w8a} dataset, PDCAe1 is better than ADCA; while for \texttt{CINA} dataset the result is opposite. It seems that ADCA  converges very fast when approaching the tail. Next, by comparing SPDCAe1 with PDCAe1, the benefit from scaling is indicated; while by comparing PDCAe1 and pDCAe, we observe that the non-monotone backtracking promotes the performance. Finally, consider that SPDCAe1 performs the best, the benefits from both scaling and the non-monotone backtracking are indicated.

\begin{table}[h!]
	\caption{Total number of iterations and CPU time (in seconds) averaged on 10 runs for \texttt{w8a} and \texttt{CINA} datasets. Bold values correspond to the best results for each dataset}
	\label{tab:1}
	\centering
	
	\begin{tabular}{cccccccccc} 
		\hline
		\multirow{2}{*}{Dataset} & \multirow{2}{*}{Algorithm} &  \multicolumn{2}{c}{$tol: 10^{-2}$} & \multicolumn{2}{c}{$tol: 10^{-4}$} & \multicolumn{2}{c}{$tol: 10^{-6}$} & \multicolumn{2}{c}{$tol: 10^{-8}$} \\
		\cline{3-10}
		& & Time &Iter.  &Time & Iter. &Time &Iter. &Time & Iter. \\
		\hline
		\multirow{4}{*}{\texttt{w8a}}	& SPDCAe1  & \textbf{0.11} & \textbf{18} & \textbf{0.18} & \textbf{32}& \textbf{0.23} & \textbf{41} &  \textbf{0.29} & \textbf{50} \\
		& PDCAe1  & 0.20 & 36 & 0.32 & 57 & 0.38 & 70 & 0.48 & 89\\
		& pDCAe  & 0.84 & 148 & 2.93 & 587 & 5.84& 1113& 7.75& 1571\\
		& ADCA  & 1.03 & 153 & 2.12 & 356 & 3.08& 486& 4.24& 739 \\
		\hline
		\multirow{4}{*}{\texttt{CINA}} & SPDCAe1  & \textbf{0.16} & \textbf{186}& \textbf{0.49} & \textbf{684} & \textbf{0.72} & \textbf{1059}& \textbf{1.01} & \textbf{1524}\\
		& PDCAe1 & 0.47 & 743 & 1.42 & 1992& 3.56& 3799 & 3.89& 5964 \\
		& pDCAe & 3.43 & 5781&Max & Max& Max& Max& Max& Max \\
		& ADCA & 0.91 & 1082 & 1.29 & 1656 & 1.76 & 2467 & 2.28 & 3152\\
		\hline
	\end{tabular}
\end{table}

%\begin{table}[h!]
%	\caption{Total number of iterations and CPU time (in seconds) averaged on 10 runs for \texttt{CINA} dataset}
%	\label{tab:2}
%	\centering
	
%	\begin{tabular}{ccccccccccc} 
%		\hline
%		\multirow{2}{*}{Algorithm} & \multicolumn{2}{c}{$tol: 10^{-1}$} & \multicolumn{2}{c}{$tol: 10^{-3}$} & \multicolumn{2}{c}{$tol: 10^{-4}$} & \multicolumn{2}{c}{$tol: 10^{-6}$} & \multicolumn{2}{c}{$tol: 10^{-8}$} \\
%		\cline{2-11}
%		& Time &Iter. & Time & Iter. &Time & Iter. &Time &Iter. &Time & Iter. \\
%		\hline
%	SPDCAe1 & \textbf{0.03} & \textbf{40} & \textbf{0.31} & \textbf{377}& \textbf{0.49} & \textbf{684} & \textbf{0.72} & \textbf{1059}& \textbf{1.01} & \textbf{1524}\\
%	PDCAe1 & 0.04 & 60 & 1.00 & 1465 & 1.42 & 1992& 3.56& 3799 & 3.89& 5964 \\
%	pDCAe & 0.10& 148 & Max & Max&Max & Max& Max& Max& Max& Max \\
%	ADCA & 0.11 & 148 & 1.06 & 1469 & 1.29 & 1656 & 1.76 & 2467 & 2.28 & 3152\\
%		\hline
%	\end{tabular}
%\end{table}
\begin{figure}[ht!]
	\subfigure[\texttt{w8a}]{\includegraphics[width=0.52\textwidth]{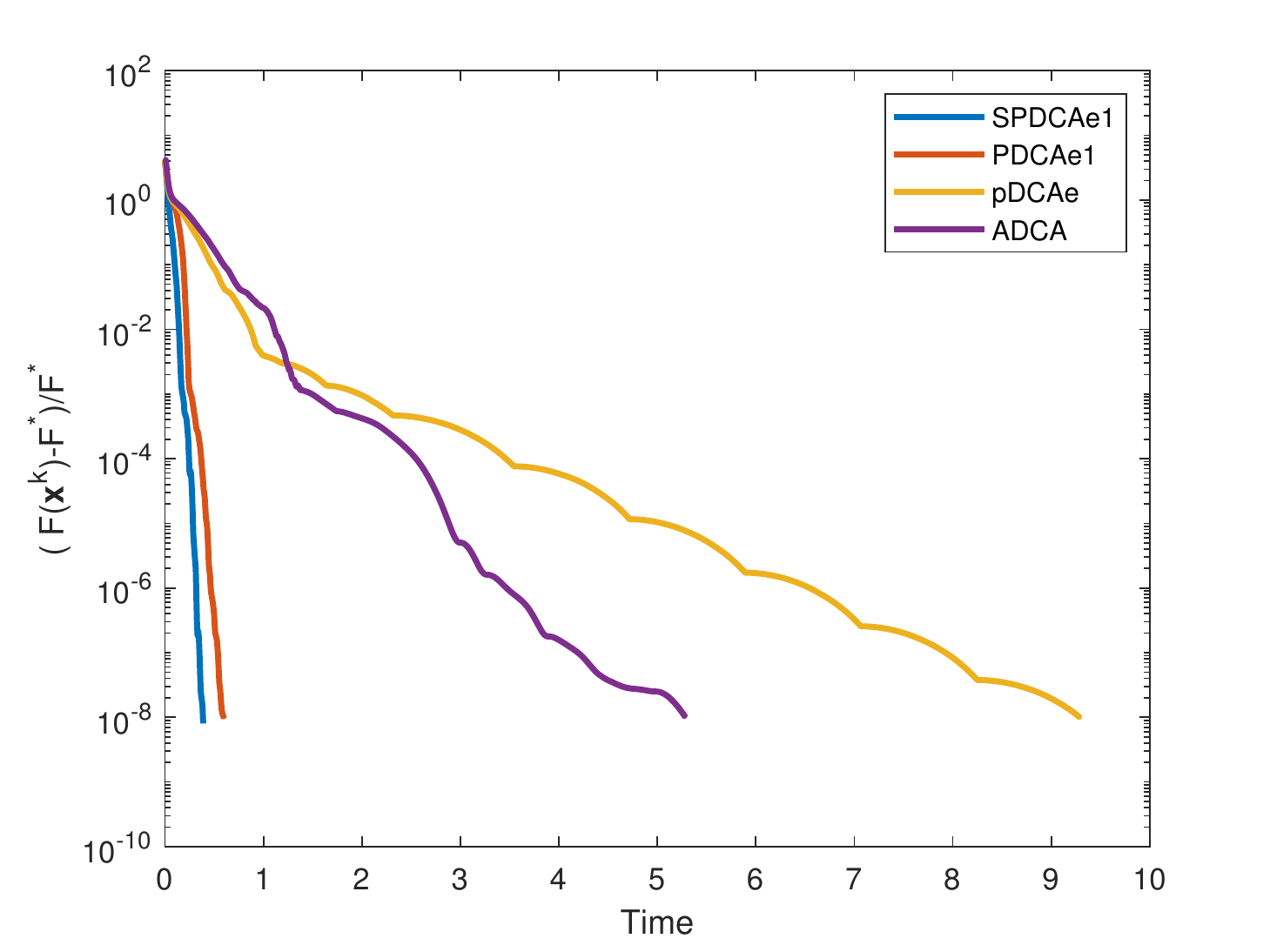}}
	\subfigure[\texttt{CINA}]{\includegraphics[width=0.52\textwidth]{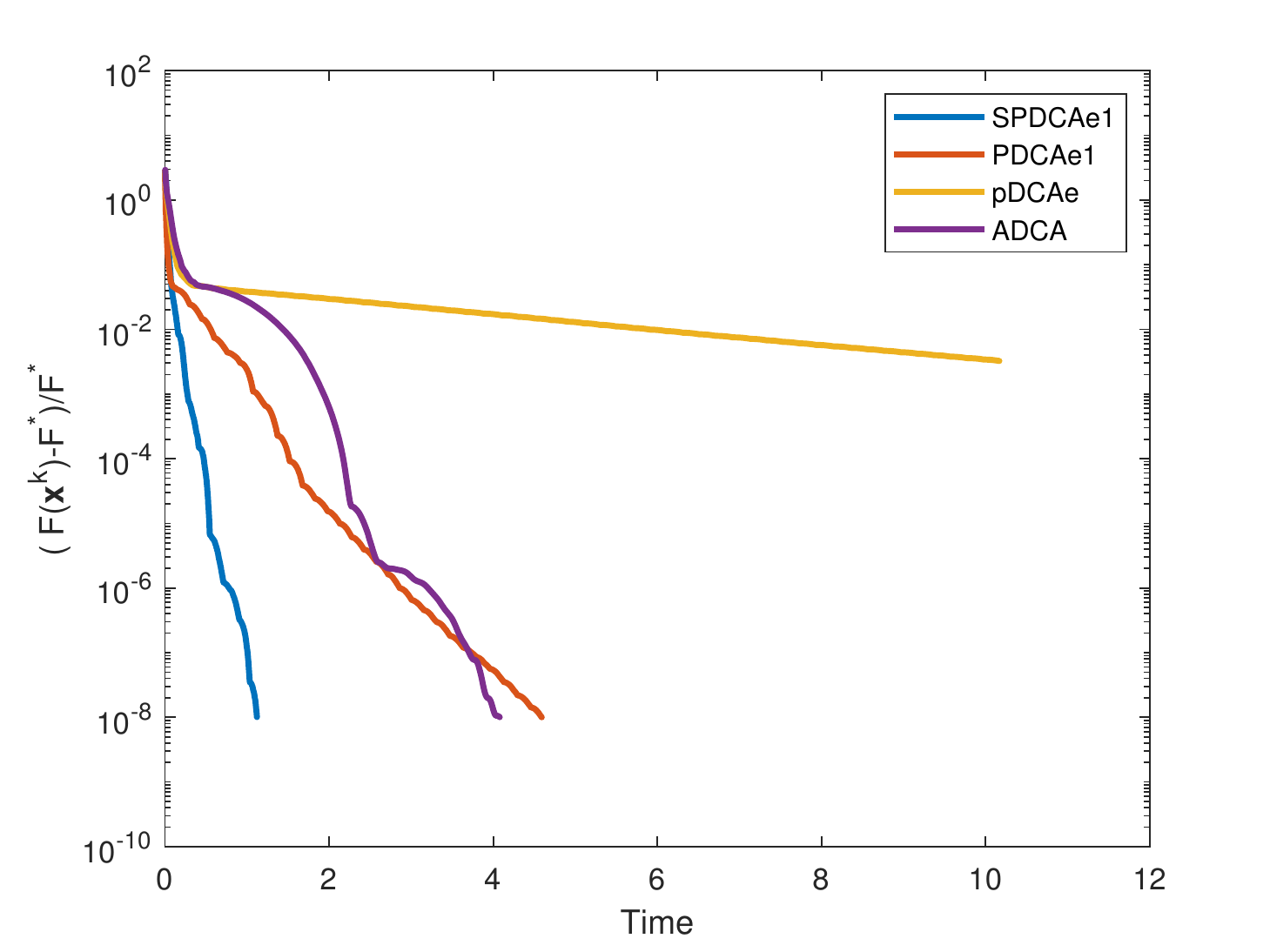}}
	\caption{Trend of the relative error for \texttt{w8a} and \texttt{CINA} datasets}
	%\caption{Performances about Poisson compressed sensing}
	\label{fig:logstic}      
\end{figure}

\subsection{Compressed sensing with Poisson noise}
In this subsection, we consider the problem of recovering  the sparse signal corrupted by Poisson noise. Specifically, we assume the observed data $\textbf{b}\in \R^m$ is the realization of a Poisson random vector with expected values being $\textbf{A}\vx_{\text{true}} + bg$, where  $\vx_{\text{true}}$ is the (actually unknown) signal of interest, $\textbf{A}\in\R^{m\times n}$ is the measurement matrix, and $bg$ (a small scalar close to 0) is a positive background. In case of Poisson noise, the generalized Kullback-Leibler divergence 
\begin{equation*}
	\text{KL}(\vx):= \sum_{i=1}^{n} \left\{b_i\log\frac{b_i}{(\textbf{A}\vx+bg)_i}+(\textbf{A}\vx+bg)_i-b_i \right\}
\end{equation*}  
is used to measure the distance of $\vx$ from the observed data $\textbf{b}$. To recover the true solution $\vx_{\text{true}}$, we use the following DC optimization model
\begin{equation}
	\label{ex2}
	\min_{\vx\in \R_{+}^n} \{\text{KL}(\vx) + \lambda \Vert \vx \Vert_1 - \lambda \Vert \vx \Vert_2\},
\end{equation}
where $\lambda >0$ is a regularization parameter. In \cite{SFBEM}, the $\ell_1$ norm is utilized for inducing sparsity, here we use $\ell_{1-2}$ penalty instead. Then \eqref{ex2} matches \eqref{P} with $f(\vx) = \text{KL}(\vx)$, $g(\vx) = \lambda \Vert \vx \Vert_1 + \delta_{\R_{+}}(\vx)$, and $h(\vx) = \lambda \Vert \vx\Vert_2$. Thus $Y$ in Algorithm \ref{algo:ASpDCAe} is the nonnegative orthant and $ {\rm \Pi}_Y^{\textbf{D}_k}$ is not up to the underling inner product. Besides, the computation about the proximal map $\text{Prox}_{t_k g}^{\textbf{D}_k}$ has closed form, which can be found in \cite[Lemma 6.5]{Beck_1order}.

We use the same procedure in \cite{SFBEM} to generate the experimental data. The measurement matrix $\textbf{A}$  actually depends on a probability, which is set as 0.9 in our test. For reader's convenience,  we describe the procedure below:

\begin{itemize}
	\item The matrix $\textbf{A}$ has been generated as detailed in \cite{CS_performance} so that $\textbf{A}$ preserves both the positivity and the flux of any signal (i.e., if $\textbf{z}\geq 0$, then $(\textbf{Az})_i \leq \sum_{i=1}^n z_i)$. 
	\item The signal $\vx_{\text{true}}\in\R^{5000}$ has all zeros except for 20 nonzeros entries drawn uniformly in the interval  $[0,10^5]$.
	\item The observed signal $\textbf{b}\in\R^{1000}$ has been obtained by corrupting the vector $\textbf{A}\vx_{\text{true}} + bg  \;  (bg=10^{-10})$ by means of the MATLAB \texttt{imnoise} function.
\end{itemize}  
 
Note that in this situation, the estimate of the bound of the smoothness parameter of KL is problematic since it is an extremely large number \cite{SPIRAL}. Thus, taking into account the efficiency, the aforementioned pDCAe and ADCA are not applicable; however, SPDCAe is still valid, from which we derive four versions: SPDCAe1, PDCAe1, SPDCAe0, and PDCAe0. The first two have already been introduced in the above subsection. Here, SPDCAe0 and PDCAe0 are their counterparts with monotone backtracking line search. Specifically, the first is SPDCAe with scaling and monotone backtracking, while the second is SPDCAe without scaling and with monotone backtracking. Our intention here is to show the effect from incorporating both the variable metric method and the non-monotone backtracking.

In our experiment, $\lambda$ is set as $10^{-3}$. The scaling matrices for SPDCAe1 and SPDCAe0 have been selected by writing the gradient of KL as
\[
-\nabla \text{KL}(\vx) = U_{\text{KL}}(\vx) - V_{\text{KL}}(\vx)
\]
with $U_{\text{KL}}(\vx)\geq 0$ and $V_{\text{KL}}(\vx)>0$; see \cite{LANTERI2001945} for a detailed description of such a decomposition. Then $\bD_k$ is defined as $$\bD_k = \text{diag}\left(\max\left(\frac{1}{\gamma_k},\min\left(\gamma_k, \frac{\vy^k}{V_{\text{KL}}(\vy^k)}  \right)\right)\right)^{-1}$$ with $\gamma_k = \sqrt{1+\frac{10^{13}}{(k+1)^2}}$.  Next for SPDCAe1 and PDCAe1, $\underline{L}$ is set as $10^{-10}$; besides, the factor $\eta$ in SPDCAe1 and PDCAe1 is set as 2, while that in SPDCAe0 and PDCAe0 is set as 1.2. The initial guesses $L_1$ for SPDCAe1 and SPDCAe0 are set as 0.1, and that for PDCAe1 and PDCAe0 are set as $10^{-5}$. Moreover, for $k\geq 2$, the initial guesses for SPDCAe1 and PDCAe1 are selected as $L_{k-1}/2$ if $k$ is not divisible by 5, otherwise as $L_{k-1}$. Next for the involved algorithms, the initial point is the $n$-dimensional column vector of all ones and the extrapolation parameter $\{\beta_k\}$ is all updated via the fixed ($T_2 = 200$) and adaptive restart scheme.  We stop the algorithms via the \emph{relative error} $(F(\vx^k) - F^*)/{F^*} \leq tol$, where $\vx^*$ is an approximately solution derived by running PDCAe1 for 10000 iterations and $F^*$ is its corresponding function value.

In Table \ref{tab:3}, we present in detail the total number of iterations and CPU time (in seconds) averaged on 10 runs for $tol \in \{10^{-i}:i\in\{1,2,\cdots,5\}\}$. The mark ``Max'' means the number of iterations exceeds 10000. Here, we observes that among all of the involved algorithms, PDCAe0 performs the worst, which requires 2910 steps  to reach the tolerance $tol = 10^{-1}$ and always hits the maximum iterations 10000 for other tolerances; on the contrary, SPDCAe1 performs the best, hitting all of the tolerances with the least amount of time and the least number of iterations. Next in Fig. \ref{subfig1:denoise}, the trend of the relative error with respect to the CPU time is plotted. Here, by comparing the trends of SPDCAe1 and SPDCAe0 (also PDCAe1 and PDCAe0) we observe the benefit from introducing the non-monotone backtracking; next by comparing SPDCAe1 and PDCAe1 (also SPDCAe0 and PDCAe0), the benefit from scaling is indicated; besides, by comparing SPDCAe0 and PDCAe1 we observe that the former benefits much from the scaling procedure and later is worse than PDCAe1. Anyway,  SPDCAe1 works the best, indicating the benefit from both the scaling method and the non-monotone backtracking. Finally, we observe from Fig. \ref{subfig2:denoise} that the solution recovered by SPDCAe1 is very close to 
the true solution.

As a conclusion, for the Poisson denoising problem, SPDCAe1 could be a promising algorithm since the benefit from incorporating the inexpensive diagonally scaling procedure for better local approximation and using the non-monotone backtracking for adaptive step-sizes selection.  

\begin{table}[h!]
	\caption{Total number of iterations and CPU time (in seconds) averaged on 10 runs. Bold values correspond to the best results}
	\label{tab:3}
	\centering

	\begin{tabular}{ccccccccccc} 
	\hline
	\multirow{2}{*}{Algorithm} & \multicolumn{2}{c}{$tol: 10^{-1}$} & \multicolumn{2}{c}{$tol: 10^{-2}$} & \multicolumn{2}{c}{$tol: 10^{-3}$} & \multicolumn{2}{c}{$tol: 10^{-4}$} & \multicolumn{2}{c}{$tol: 10^{-5}$} \\
	\cline{2-11}
	& Time &Iter. & Time & Iter. &Time & Iter. &Time &Iter. &Time & Iter. \\
	\hline
	SPDCAe1 &  \textbf{0.22} & \textbf{28} & \textbf{0.29} & \textbf{38} & \textbf{0.31} & \textbf{42} & \textbf{0.39} & \textbf{54} & \textbf{0.40} & \textbf{55} \\
	PDCAe1 &  0.57 &  78  & 0.80 & 101 & 0.81& 104 & 0.96 & 110 & 0.97  & 112 \\
	SPDCAe0 &  0.56 & 79& 1.86 & 298 &3.25 & 456 & 6.91 & 953 & 18.62 & 2508\\
	PDCAe0 &  19.66 & 2910& Max & Max & Max & Max & Max & Max & Max & Max \\
	\hline
	\end{tabular}
\end{table}

\begin{figure}[ht!]
  \subfigure[Trend of relative error]{\includegraphics[width=0.52\textwidth]{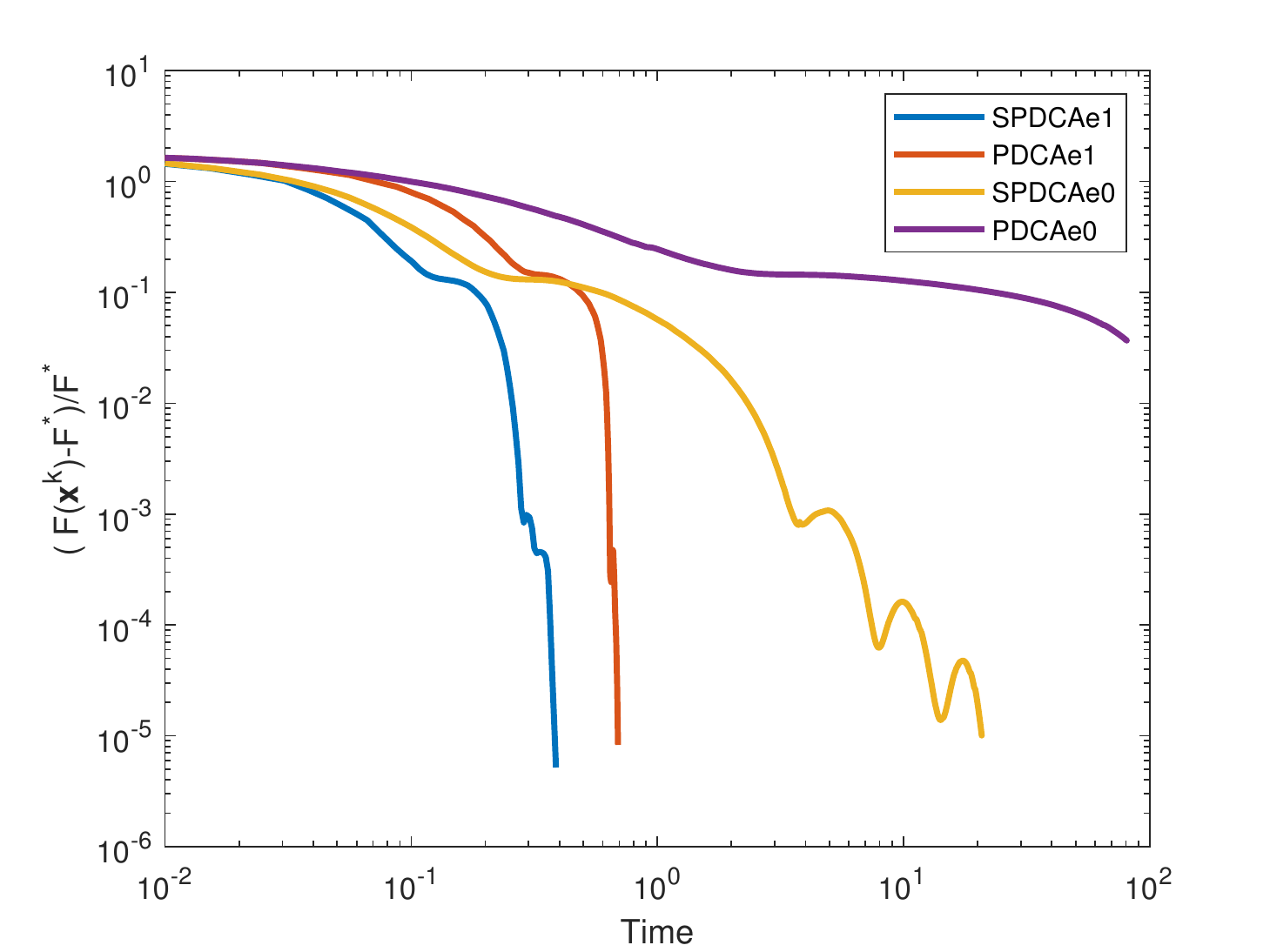}
  \label{subfig1:denoise}
	}
  \subfigure[The true solution and the solution obtained by SPDCAe1 under $tol = 10^{-5}$ ]{\includegraphics[width=0.52\textwidth]{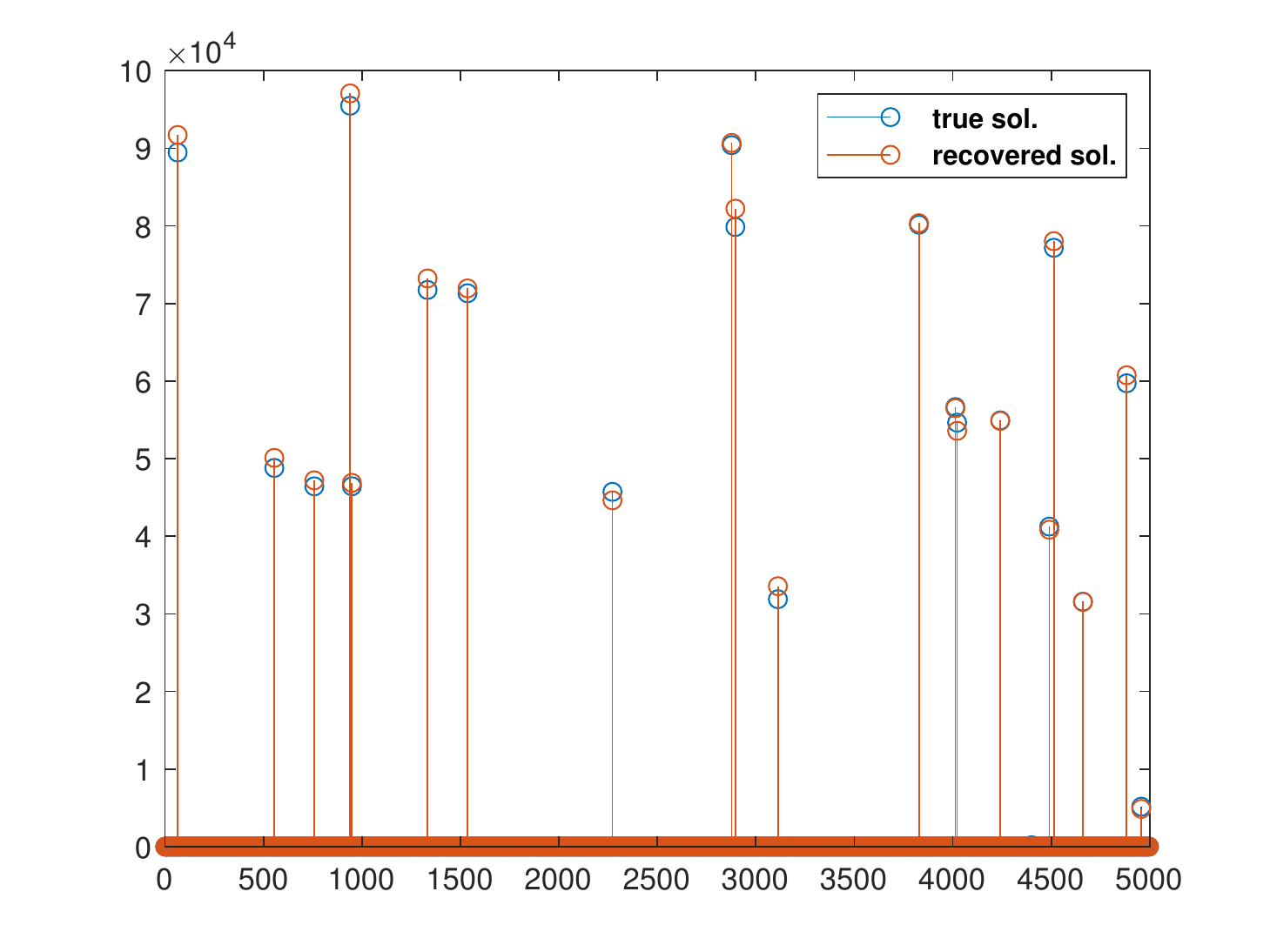}
  \label{subfig2:denoise}
	}
  \caption{Performance for the Poisson denoising problem}
%\caption{Performances about Poisson compressed sensing}
\label{fig:denoise}      
\end{figure}

\section{Conclusion}
\label{sec:6}
In this paper, we propose a DC programming algorithm, called SPDCAe, for solving a composite DC program \eqref{P}, which incorporates the variable metric method, the Nesterov's extrapolation, and the backtracking line search (not necessarily monotone). We establish the subsequential convergence to a critical point of SPDCAe under suitable selections of the extrapolation parameters and the scaling matrices. Besides, we also demonstrate that the convex version of SPDCAe for \eqref{CP}, denoted by SFISTA, enjoys the optimal $\mathcal{O}(1/k^2)$ convergence rate in function values. This rate of convergence coincides with that of the well-known FISTA algorithm \cite{FISTA} and SFBEM \cite{SFBEM} (a scaling version of FISTA with monotone backtracking). Numerical simulations on sparse binary logistic regression demonstrate the good performances of our methods 
(especially that with scaling and non-monotone backtracking) compared with pDCAe \cite{wen2018proximal} and ADCA \cite{nhat2018accelerated}. Furthermore, for compressed sensing with Poisson noise problem, both pDCAe  and ADCA are not applicable, while our algorithm is still valid, where we show the benefits of including scaling and non-monotone backtracking.   

As further researches, first, the global sequential convergence of SPDCAe to a critical point under the  Kurdyka-Łojasiewicz  property \cite{bolte2007lojasiewicz,attouch2009convergence,attouch2013convergence} is worth noting; second, the SPDCAe could be compared with other accelerated algorithms, e.g., boosted-DCA \cite{aragon2018boosted,niu2019higher}, inertial-DCA \cite{de2019inertial,RInDCA} and accelerated methods based on second-order ODE \cite{niu2019discrete,francca2021gradient} for some suitable applications; moreover, it deserves developing some ingenious procedures for both scaling and adaptive step-sizes selection. Finally, it might be meaningful for designing some inexact variants for addressing the case where the computation of the proximal mapping of $g$ can not be exactly conducted.

\begin{acknowledgements} 
	This work is supported by the National Natural Science Foundation of China (Grant 11601327).
\end{acknowledgements}

%This work is supported by the National Natural Science Foundation of China (Grant 11601327). We thank the anonymous referees for their valuable remarks.

% BibTeX users please use one of
%\bibliographystyle{spbasic}      % basic style, author-year citations
%\bibliographystyle{spmpsci}      % mathematics and physical sciences
%\bibliographystyle{spphys}       % APS-like style for physics
\bibliographystyle{spmpsci}      
\bibliography{mybib}   

\end{document}